\documentclass[12pt,oneside]{amsart}
\usepackage{amssymb,amsmath,enumitem,array}
\usepackage{multirow,booktabs,geometry,graphicx,mathtools,tikz-cd}
\usepackage{xcolor}
\allowdisplaybreaks

\colorlet{mdtRed}{red!50!black}
\definecolor{dblue}{rgb}{0,0,0.6}
\colorlet{dullmagenta}{magenta!50!black}
 \usepackage{hyperref}

\usepackage{hyperref}
\hypersetup{
    colorlinks,
    citecolor=blue,
    filecolor=dullmagenta,
    linkcolor=dblue,
    urlcolor=mdtRed
}

\newtheorem{theorem}{Theorem}[section]
\newtheorem{lemma}[theorem]{Lemma}
\newtheorem{proposition}[theorem]{Proposition}
\newtheorem{corollary}[theorem]{Corollary}

\newtheorem{question}[theorem]{Question}

\theoremstyle{definition}

\newtheorem{remark}[theorem]{Remark}
\newtheorem{example}[theorem]{Example}

\numberwithin{equation}{section}

\DeclareMathOperator{\rk}{Rk}
\DeclareMathOperator{\HF}{HF}

\everymath{\displaystyle}

\title{Waring rank of certain complete intersection forms}

\author[Bhat]{Meghana Bhat}
\address[Meghana Bhat]{College of Computing and Data Science, Nanyang Technological University, 50 Nanyang Ave, Block N 4, Singapore - 639798}
\email{meghana.bhat@ntu.edu.sg}

\author[Carlini]{Enrico Carlini}
\address[Enrico Carlini]{DISMA Department of Mathematical Sciences, Politecnico Di Torino,  Corso Duca degli Abruzzi, 24, Torino- 10129, Italy}
\email{enrico.carlini@polito.it}

\author[Masuti]{Shreedevi K. Masuti}
\address[Shreedevi K. Masuti]{Department of Mathematics, Indian Institute of Technology Dharwad, Permanent Campus, Chikkamalligwad,  Dharwad - 580011, Karnataka, India}
\email{shreedevi@iitdh.ac.in}

\author[Sebastian]{Ronnie Sebastian}
\address[Ronnie Sebastian]{Department of Mathematics, Indian Institute of Technology Bombay, Powai, Mumbai - 400076, Maharashtra, India}
\email{ronnie@iitb.ac.in}

\thanks{MB is supported by the SUG Grant WBS 03INS002590C130 funded by NTU Singapore, and at the beginning of this project MB was supported by the Prime Minister's Research Fellowship (PMRF), Govt. of India. EC acknowledge that this study was carried out within the PRIN 2022 PNRR project grant P2022E2Z4AK “0-Dimensional Schemes, Tensor Theory, and Applications” funded by European Union– Next Generation EU within the PRIN 2022 program (D.D. 104- 02/02/2022 Ministero dell’Università e della Ricerca). 
The project was partially supported by SPARC grant SPARC/2019-2020/P1566/SL}

\date{\today}
\keywords{Waring rank, sum of powers decomposition}
\subjclass[2020]{Primary: 14N07 ; Secondary:  13P10}
\begin{document}
\maketitle
 \tableofcontents
\begin{abstract}
The complete intersection forms are the homogeneous polynomials whose annihilator ideal or the Macaulay dual, is an Artinian complete intersection ideal. In this paper, we compute the Waring rank of certain complete intersection forms. As an application, we compute the Waring rank of binomials whose annihilator is a complete intersection ideal. Further, we also compute the Waring rank of the product of binary forms in different sets of variables.
\end{abstract}

\section{Introduction}
     Let $S=\mathbb{C}[x_0, \ldots, x_n] = \bigoplus_{i \geq 0}S_i$ be the standard graded polynomial ring, and  $F \in S$ be a homogeneous polynomial of degree $d,$ also known as a degree $d$ form. It is well known that there exist $L_1, \ldots, L_r \in S_1$ such that \[F=\sum_{i=1}^{r}\alpha_iL_i^d,\]  for some $\alpha_i \in \mathbb{C},$ $1 \leq i \leq r$  (see \cite{IK99}).
    Such a decomposition of $F$ is called  {\it Waring decomposition} or  {\it sum of power decomposition} of $F$. The least integer $r$ such that there exists a Waring decomposition of $F$ with exactly $r$ summand is  called the {\it Waring rank} of $F$ denoted by $\rk F.$

    Even though computing the Waring rank of a given form is an interesting problem in itself,
    it also has significant applications in diverse areas of mathematics like Algebraic Statistics, Signal Processing, Computational Complexity, Quantum Information Theory, etc \cite{BC12, QuantumInformationTheory, Lan12, Lan14GCT,Lan22,  LT10}. 
    \vskip 2mm

There are interesting articles in the literature on computing the Waring rank of a family of forms. For instance,
 a celebrated result by J. Alexander and A. Hirschowitz determines the rank of a generic form \cite{AH95}, and the Waring rank of monomials is computed in \cite{BBT13, CCG12}. 
 The Waring rank of a few more families is computed in \cite{CCCGW18, GG24}.
     In \cite{BM21}, the Waring rank of binomials in two variables is computed. Recently, the Waring decomposition of special binomials is studied in \cite{CGM25} motivated by their connections with complexity theory. This article is
     inspired by the recent works \cite{AltafiEtall25,GR25,Shibata25} on the characterization of a binomial $F$ such that the corresponding apolar algebra 
 $T/F^\perp$ where $T=\mathbb{C}[X_0,\ldots,X_n]$ is a complete intersection (see Section \ref{Sec:Background} for the definition of $F^\perp$).
  In fact, they showed that the annihilator ideal $F^{\perp}$ of such a binomial $F$ is a {\it quasi-monomial complete intersection} in ``most" of the cases, that is, an ideal of the form
  \[
  F^{\perp}=(X_0^{a_0}+G, \, X_1^{a_1}, \, X_2^{a_2}, \, \ldots, \, X_n^{a_n})
  \]
  where $G$  is a homogeneous form in $T$ of degree $a_0$ with $\deg_{X_0} G < a_0$, where $\deg_{X_0}G$ denotes the degree of $X_0$ in $G$.   Motivated by these results, 
  we call a non-zero form a {\emph{complete intersection form}} if its annihilator ideal is a complete intersection, and
 we ask the following question:

    \begin{question}\label{Question:WaringRkCompleteIntersection}
        Compute the Waring rank of complete intersection forms.
    \end{question}
    Any binary form (form in two variables) is a complete intersection form. In \cite{sylvester1851lx, Sylvester1851}, Sylvester gives an algorithm to compute the rank of binary forms. In \cite{TW15}, the Waring rank of Vandermonde forms \[V_n = \prod_{1 \leq i<j\leq n}(x_i -x_j)\] is computed and $\rk(V_n) = (n-1)!$. The Vandermonde forms are also complete intersection forms (see discussion before \cite[Proposition 4.17]{CCCGW18}). Note that the computation of other notions of rank, such as cactus and smoothable rank (see \cite{RS11, IK99} for definition), of complete intersection forms follows easily from a proof similar to \cite[Corollary 2]{RS11}. Surprisingly, the Waring rank of complete intersection forms remains a difficult problem.
    
    In this paper, we answer Question \ref{Question:WaringRkCompleteIntersection} partially (Theorem~\ref{thm2} and Theorem \ref{thm1}). As a consequence, we obtain the Waring rank of forms whose annihilator is a quasi-monomial complete intersection (Theorem \ref{Thm:MainRank}). 
    {Further, we use these results to compute the Waring rank of binomials whose annihilator ideal is a complete intersection (Theorem \ref{Thm:BinomialRank})}, and compute the Waring rank of the product of binary forms in different sets of variables (Theorem \ref{Thm:ProductBinary}).

    Our techniques of proof are motivated by the computation of the Waring rank of monomials. For complete intersection forms, we first employ e-compatibility to establish a definitive lower bound on the rank. We then prove that this bound is sharp by constructing a set of distinct points within the annihilator ideal whose cardinality matches this bound in certain cases.  Hence, we achieve the rank in this case. Moreover, to apply the result on the Waring rank of quasi-monomials in the case of the Waring rank of binomials, we need to know whether a certain generator of the annihilator ideal is square-free, which in general is not an easy task. Thanks to the results of \cite{GR25,Shibata25}, which provide a nice description of the generators and that indeed helped our computations.

\vskip 2mm

     The paper is organized as follows: In Section \ref{Sec:Background}, we recall some useful results from the apolarity theory. 
    In Section \ref{Sec:ComputationofWRCI}, we first compute some colon ideals which are useful in giving a lower bound on the Waring rank of $F $ using e-compatibility.
   Then we prove the main result of the paper, where we compute the Waring rank of complete intersection forms satisfying some additional conditions. As an application, in Section \ref{sec:Rank of binomials} we determine the Waring rank of binomials which are complete intersection forms. In addition, we also compute the Waring rank of forms that are the products of a binary form and a monomial in different sets of variables, and the product of binary forms in two different sets of variables in Section \ref{Sec:product-binary-forms}.
\vskip 2mm

In this paper, we have used Macaulay2 \cite{M2}, Singular \cite{Singular}, and the library \cite{EliasCode} for various computations.
\vskip 2mm
\noindent {\bf Acknowledgements}: MB and SKM thank the Politecnico di Torino for the hospitality. MB is grateful to EC for giving her an opportunity to visit him in April-May 2025, when this project began. MB also thanks IIT Bombay for hospitality in July 2026, where MB and RS further discussed and improved the results. EC thanks IIT Dharwad for the kind hospitality during the phases of this research.

\section{Background}\label{Sec:Background}
In this section, we recall a few well-known results in the literature that we use in the subsequent sections. 
We first recall results related to the Waring rank of forms. An important tool in the study of the Waring rank is the apolarity action. 
    Let $S=\mathbb{C}[x_0, \ldots, x_n]$ be the standard graded polynomial ring over $\mathbb{C}.$ The apolarity action of $T$ on  $S$ defined on the monomials as
    \[X_0^{a_0} \cdots X_n^{a_n} \circ (x_0^{b_0}\cdots x_n^{b_n}) = \frac{\partial^{a_0}}{\partial x_0^{a_0}}\cdots \frac{\partial^{a_n}}{\partial x_n^{a_n}}(x_0^{b_0}\cdots x_n^{b_n}),\]
    and then extended linearly on $S.$ 
    Given a homogeneous form $F \in S_d,$  we define the {\it annihilator ideal}
    \[F^{\perp} = \{ g \in T \mid g \circ F = 0\}.\] 
    Given a linear form $L = \sum_{i=0}^{n}a_{i}x_i \in S$,  
    we associate a point $P=[a_{0}: \cdots:a_{n}] \in \mathbb{P}^n,$ and vice-versa. 
    The following lemma describes the usefulness of this correspondence in the Waring decomposition.

\begin{lemma}\cite[Lemma 1.15]{IK99}\label{Lemma:Apolarity}  \rm(Apolarity Lemma)
      Let $\mathbb{X}=\{P_1, \ldots, P_r\} \subset \mathbb{P}^n$ be a set of points. Then  $F = \sum_{i=1}^r\alpha_i L_i^d$ for some $\alpha_i \in \mathbb{C}$ if and only if $I(\mathbb{X}) \subset F^{\perp}$ where $L_i$ is the linear form corresponding to the point $P_i$ for $1 \leq i \leq r.$
    \end{lemma}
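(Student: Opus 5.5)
The Apolarity Lemma reduces to the inclusion $I(\mathbb{X})_d \subset (F^\perp)_d$, using the standard duality between $T_d$ and $S_d$. Throughout, identify $T_d$ with the dual of $S_d$ via the apolarity pairing $\langle g, G\rangle = g\circ G$ for $g\in T_d$, $G\in S_d$. This pairing is perfect: on monomials it is diagonal with nonzero entries $a_0!\cdots a_n!$.

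The first key computation is the evaluation formula. For a linear form $L=\sum a_i x_i$ with point $P=[a_0:\cdots:a_n]$ and any $g\in T_k$ with $k\le d$, one has
\[
g\circ L^d=\frac{d!}{(d-k)!}\,g(a_0,\ldots,a_n)\,L^{d-k}.
\]
This is checked on monomials by the chain rule, since $X_i\circ L^m = m a_i L^{m-1}$, and then extended by linearity. In particular, if $g$ vanishes at $P$ then $g\circ L^d=0$ for every $k\le d$. When $k>d$ we have $g\circ L^d=0$ trivially.

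Next, prove ($\Rightarrow$). Suppose $F=\sum\alpha_i L_i^d$ and $g\in I(\mathbb{X})$ is homogeneous of degree $k$. If $k\le d$, the formula gives $g\circ F=\sum \alpha_i \frac{d!}{(d-k)!}g(P_i)L_i^{d-k}=0$. If $k>d$, then $g\circ F=0$ for degree reasons. Since $I(\mathbb{X})$ is homogeneous, this gives $I(\mathbb{X})\subset F^\perp$.

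Now prove ($\Leftarrow$). Consider the subspace $W=\langle L_1^d,\ldots,L_r^d\rangle\subset S_d$ and its orthogonal $W^\perp\subset T_d$ under the perfect pairing. By the evaluation formula with $k=d$, $g\circ L_i^d=d!\,g(P_i)$, so $W^\perp=\{g\in T_d: g(P_i)=0 \ \forall i\}=I(\mathbb{X})_d$. The hypothesis gives $I(\mathbb{X})_d\subset F^\perp_d$, and $F^\perp_d$ is exactly the orthogonal of $\langle F\rangle$. Since the pairing is perfect, taking orthogonals reverses the inclusion: $\langle F\rangle = (F^\perp_d)^\perp \subset (W^\perp)^\perp = W$. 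Hence $F$ is a linear combination of the $L_i^d$.

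The main point requiring care is the double-orthogonal step, which needs the pairing on $S_d\times T_d$ to be perfect. This holds because the monomials give dual bases up to the nonzero scalars $\prod a_i!$, which is where characteristic zero is used. Everything else is routine.
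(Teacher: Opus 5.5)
Your proof is correct and complete. The evaluation formula $g\circ L^d=\frac{d!}{(d-k)!}\,g(P)\,L^{d-k}$ gives ($\Rightarrow$) in every degree. For ($\Leftarrow$), identifying $I(\mathbb{X})_d$ with $\langle L_1^d,\ldots,L_r^d\rangle^{\perp}$ under the perfect apolarity pairing and then taking double orthogonals is the standard duality proof.

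The paper gives no proof of its own here; it simply quotes \cite[Lemma 1.15]{IK99}, whose proof is essentially this same argument.
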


    Any $\mathbb{X}$ such that $I(\mathbb{X}) \subset F^{\perp}$ is called an {\it apolar set } of $F.$
    Note that the Apolarity Lemma is useful in obtaining an upper bound for the Waring rank of $F.$ But obtaining a lower bound is also a difficult task. In \cite{CCCGW18},  Carlini, Catalisano, Chiantini, Geramita and Woo introduced the idea of e-compatibility, which is very useful to obtain lower bounds for the Waring rank of forms.

\begin{theorem}\cite[Theorem 3.3]{CCCGW18}\label{Thm:Ecomputability} 
    Let $F \in S_d,$ and $\mathbb{X}\in \mathbb{P}^n$ be an apolar set of $F.$ Let $I$ be an ideal in $T$ generated in degree $e >0,$  and $t \in I_e$. If $t$ is a non-zerodivisor in $T/(I(\mathbb{X})\colon I)$, then for $s >>0,$ we have
    \[ |\mathbb{X}| \geq \frac{1}{e}\sum_{i=0}^s\HF(T/I(\mathbb{X})\colon I +(t), i) \geq \frac{1}{e} \sum_{i=0}^s\HF(T/F^{\perp}\colon I +(t), i).\]
\end{theorem}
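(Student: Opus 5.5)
The plan is to show that the middle quantity equals $|\mathbb{Y}|$ for a subset $\mathbb{Y}\subseteq\mathbb{X}$, and then to compare with $F^{\perp}$ using only the containment $I(\mathbb{X})\subset F^{\perp}$ from the Apolarity Lemma (Lemma \ref{Lemma:Apolarity}).

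\emph{Step 1: identify the colon ideal.} Set $J=I(\mathbb{X})\colon I$. Since $I(\mathbb{X})=\bigcap_{P\in\mathbb{X}} I(P)$ and each $I(P)$ is prime, we have $I(P)\colon I=T$ if $I\subseteq I(P)$, and $I(P)\colon I=I(P)$ otherwise. Hence $J=I(\mathbb{Y})$, where $\mathbb{Y}=\{P\in\mathbb{X} : I\not\subseteq I(P)\}$. In particular $T/J$ is the coordinate ring of a finite set of points. It is therefore one-dimensional and Cohen--Macaulay, with $\HF(T/J,i)=|\mathbb{Y}|$ for all $i\gg0$. (If $\mathbb{Y}=\emptyset$ then $J=T$ and everything below is $0$.)

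\emph{Step 2: the Hilbert function computation.} Since $t\in I_e$ is a non-zerodivisor on $T/J$, the sequence
\[
0\to (T/J)(-e)\xrightarrow{\cdot t} T/J\to T/(J+(t))\to 0
\]
is exact. It gives $\HF(T/(J+(t)),i)=\HF(T/J,i)-\HF(T/J,i-e)$, with the convention $\HF=0$ in negative degrees. Summing over $i=0,\dots,s$ telescopes to
\[
\sum_{i=0}^{s}\HF(T/(J+(t)),i)=\sum_{i=s-e+1}^{s}\HF(T/J,i).
\]
For $s\gg0$ the right-hand side is $e\,|\mathbb{Y}|$. Dividing by $e$ and using $|\mathbb{Y}|\le|\mathbb{X}|$ yields the first inequality.

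\emph{Step 3: comparison with $F^{\perp}$.} By the Apolarity Lemma, $I(\mathbb{X})\subset F^{\perp}$, so $I(\mathbb{X})\colon I\subseteq F^{\perp}\colon I$. Hence $J+(t)\subseteq (F^{\perp}\colon I)+(t)$. Consequently $T/((F^{\perp}\colon I)+(t))$ is a quotient of $T/(J+(t))$, and its Hilbert function is smaller degreewise. Summing over $i=0,\dots,s$ gives the second inequality.

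Step 1, that the colon ideal $J$ is again a radical ideal of points (so that the Hilbert function stabilizes at $|\mathbb{Y}|$), is the only point needing care. The hypothesis that $t$ is a non-zerodivisor is used only in Step 2, to get the exact sequence; note that it is imposed on $T/J$, not on $T/F^{\perp}\colon I$. The other steps are formal.
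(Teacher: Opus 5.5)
Your proof is correct and essentially the standard one. The paper gives no proof and simply quotes \cite[Theorem 3.3]{CCCGW18}, whose argument runs as yours does: $I(\mathbb{X})\colon I=I(\mathbb{Y})$ with $\mathbb{Y}=\mathbb{X}\setminus V(I)$; the non-zerodivisor $t$ gives $\HF(T/(I(\mathbb{Y})+(t)),i)=\HF(T/I(\mathbb{Y}),i)-\HF(T/I(\mathbb{Y}),i-e)$, whose partial sums stabilize at $e|\mathbb{Y}|\le e|\mathbb{X}|$; and the second inequality follows from $I(\mathbb{X})\colon I\subseteq F^{\perp}\colon I$ (here an apolar set is defined by $I(\mathbb{X})\subset F^{\perp}$, so you do not even need the Apolarity Lemma).
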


\vskip 2mm

\section{Waring rank of certain complete intersection forms}\label{Sec:ComputationofWRCI}
Let $T=\mathbb{C}[X_0, \ldots, X_n]$ be the 
standard graded polynomial ring. We first prove a colon identity which is useful in the computation of a lower bound of the Waring rank of complete intersection forms.
    
\begin{lemma}\label{Lemma:ColonRegularSequence}
    Let $h, f_2, \ldots, f_n$ be a regular sequence 
    of homogeneous polynomials in $T$, and 
    $J=(f_2, \ldots, f_n)$. For any homogeneous polynomial $a\in T$ we have
    \begin{align*}
        (J+ (ha)) :(h)= (J:(h)) +(a).
    \end{align*}
\end{lemma}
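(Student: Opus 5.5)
Both inclusions are to be proved. The containment $(J:(h))+(a)\subseteq (J+(ha)):(h)$ is immediate. If $g\in J:(h)$, then $gh\in J\subseteq J+(ha)$. Also $a\cdot h=ha\in J+(ha)$. Since $(J+(ha)):(h)$ is an ideal, it contains the sum.

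For the reverse inclusion, let $g$ be homogeneous with $gh\in J+(ha)$. Write
\[
gh = j + c\,ha
\]
with $j\in J$ and $c\in T$. Then $h(g-ca)=j\in J$, so $g-ca\in J:(h)$. Hence
\[
g = (g-ca)+ca \in (J:(h))+(a),
\]
which gives the claim.

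Note that this argument uses nothing about regularity. In fact, since $h,f_2,\dots,f_n$ is a regular sequence of homogeneous polynomials, any permutation of it is again regular. In particular $h$ is a nonzerodivisor on $T/J$, so $J:(h)=J$ and the identity reads $(J+(ha)):(h)=J+(a)$. The hypothesis is presumably included so that this simplified form can be used later. I would state that consequence as a remark.

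There is no real obstacle; the only point to check is that the decomposition $gh=j+cha$ exists, which is just the definition of membership in $J+(ha)$. If homogeneity is wanted, one may take the homogeneous components of $j$ and $c$ of the appropriate degrees.
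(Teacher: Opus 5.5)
Your proof is correct and follows the same route as the paper: write $gh = j + c\,ha$ and rearrange to $h(g-ca)=j\in J$. The only difference is in the last step. The paper uses that $h$ is a nonzerodivisor on $T/J$ to place $g-ca$ in $J\subseteq J:(h)$, whereas you get $g-ca\in J:(h)$ directly from the definition of the colon ideal. You rightly observe that the identity therefore needs no hypothesis, and that regularity is only needed afterwards, in the proof of the lower-bound proposition, to replace $J:(h)$ by $J$.
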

\begin{proof}
    Let $g\in (J+ (ha)) :(h)$. Therefore
    $gh \in J+ (ha)$. That is, 
    $gh = b' + a'ha$ for some $b' \in J$ and $a' \in T.$
    Since $h$ is a nonzerodivisor on $T/J$, we obtain
    $g-a'a \in J$. 
    Hence $g = b+a'a$ for some $b \in J$. Therefore $g \in (J:(h)) +(a)$. Verifying the other way containment is an easy check.
\end{proof}

We now prove a lower bound for the Waring rank of complete intersection forms $F$ with at least one of the generators of $F^{\perp}$ being non-reduced.
    
\begin{proposition}\label{Theorem:Rank CI non reduced}
    Let $F^\perp = (f_0, f_1, \ldots,f_n) \subseteq T$ be a complete intersection.
    Let $\deg f_i = a_i$ and assume that  $a_0 \leqslant a_1\leqslant \ldots \leqslant a_n$. 
    Assume that $f_i$ is non-reduced for some $i$. Then
    \[
    \rk(F) \geqslant \frac{1}{a_k}\prod\nolimits_{i=0}^n a_i\,,
    \] 
    where $k$ is the smallest such that $f_k$ is not reduced.
\end{proposition}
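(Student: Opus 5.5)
We use e-compatibility (Theorem~\ref{Thm:Ecomputability}) with a linear factor of the non-reduced generator, in the spirit of the monomial case. Since $f_k$ is not reduced, write $f_k = h^2 a$ with $h$ homogeneous and non-constant. We may take $h$ irreducible of degree $e := \deg h \geq 1$. Let $\mathbb{X}$ be an apolar set of $F$ with $|\mathbb{X}| = \rk(F)$. Then $I(\mathbb{X})$ is radical and contained in $F^\perp$. We apply Theorem~\ref{Thm:Ecomputability} with $I = (h)$, which is generated in degree $e$, and $t = h$.

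First we check the non-zerodivisor hypothesis. The ideal $I(\mathbb{X}) : (h)$ is the ideal of the points of $\mathbb{X}$ not lying on $\{h=0\}$. The element $h$ vanishes at none of these points, so $h$ is a non-zerodivisor on $T/(I(\mathbb{X}):(h))$. Theorem~\ref{Thm:Ecomputability} then gives
\[
\rk(F) = |\mathbb{X}| \geq \frac{1}{e}\sum_{i} \HF\bigl(T/(F^\perp : (h)) + (h), i\bigr) = \frac{1}{e}\dim_{\mathbb{C}} T/\bigl((F^\perp:(h)) + (h)\bigr).
\]

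Next we compute the colon ideal with Lemma~\ref{Lemma:ColonRegularSequence}. Reorder the generators so that $f_k$ comes first, and set $J = (f_i : i \neq k)$. The sequence $h, \{f_i\}_{i\neq k}$ is regular. Indeed, $ha, \{f_i\}_{i\ne k}$ is regular, since $h^2a, \{f_i\}_{i\neq k}$ is a regular sequence in the graded ring $T$, and a factor of a regular element that keeps the system of parameters is still a parameter. Hence $h$ is regular on $T/J$, and so $J:(h) = J$. Writing $f_k = h\cdot(ha)$, the lemma gives
\[
F^\perp:(h) = (J + (h\cdot ha)):(h) = J + (ha).
\]
Therefore $(F^\perp:(h)) + (h) = J + (h)$, because $ha \in (h)$. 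Now $h, \{f_i\}_{i\neq k}$ is a homogeneous regular sequence, i.e.\ a complete intersection. Its length is the product of the degrees:
\[
\dim_{\mathbb{C}} T/(J+(h)) = e\prod_{i\neq k} a_i .
\]
This yields $\rk(F) \geq \prod_{i\ne k} a_i = \frac{1}{a_k}\prod_{i=0}^n a_i$.

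Two points need care.

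\emph{Regularity of $h, \{f_i\}_{i\neq k}$.} This follows from dimension considerations. $T/(f_0,\dots,f_n)$ is Artinian, so $V(f_i : i\neq k)$ is a finite set of lines through the origin (projectively, finitely many points). Also $V(h) \cap V(J) \subseteq V(F^\perp) = \{0\}$. So $h, \{f_i\}_{i \neq k}$ is a homogeneous system of parameters, hence regular in the Cohen--Macaulay ring $T$.

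\emph{Applicability of the lemma.} The lemma requires $h, f_2,\dots$ with $J$ generated by the other elements, and this is exactly the set-up above.

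The minimality of $k$ plays no role in this argument. It only ensures that the bound is the best one the argument gives, since $\prod_{i\ne k}a_i$ is largest when $a_k$ is smallest. So the statement holds with any non-reduced $f_k$, and $k$ is chosen to be the smallest index for strength.
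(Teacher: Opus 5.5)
Your proposal is correct and follows the paper's proof essentially step for step. Both use e-compatibility (Theorem~\ref{Thm:Ecomputability}) with $I=(h)$ and $t=h$, compute $(F^\perp:(h))+(h)=J+(h)$ via Lemma~\ref{Lemma:ColonRegularSequence}, and then divide the length $e\prod_{i\neq k}a_i$ of the complete intersection $J+(h)$ by $e$. The differences are only in justification: you get the non-zerodivisor property from $I(\mathbb{X}):(h)=I(\mathbb{X}\setminus V(h))$ rather than from the paper's direct radicality argument, you supply the system-of-parameters argument for the regularity of $h,\{f_i\}_{i\neq k}$ that the paper only asserts, and your remark that the minimality of $k$ is not needed for the inequality is correct.
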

\begin{proof}
    \noindent   
    Let $f_k =h^2 g$ and $I=(h)$ with $\deg h \geq 1$. Suppose $\mathbb{X}\subset \mathbb P^n$
    is an  apolar set of $F$ such that $I(\mathbb{X})\subset F^\perp$.
    
    First we show that $h$ 
    is not a zero divisor on $T/(I(\mathbb{X}) : I)$. 
    If $h\in F^\perp=(f_0,\ldots,f_n)$, then since ${\rm deg}(h)<{\rm deg}(f_k)$,
    it follows that $f_k = h^2g$ is in the ideal $(f_0,\ldots,f_{k-1})$,
    a contradiction. Thus, $h \notin F^\perp$, hence $h \notin I(\mathbb{X})$.
    Suppose there exists $g_1 \in T$ such that 
    \[\bar{h}\bar{g_1} = \overline{0} \]
    in $T / (I(\mathbb{X}) : I)$,
    then $h^2g_1\in I(\mathbb X)$. Since $I(\mathbb X)$
    is reduced, we get $hg_1\in I(\mathbb X)$, which shows that $g_1 \in (I(\mathbb X):I)$.
    Hence $h$ is not a zero divisor on $T/(I(\mathbb{X}) : I)$.

    Now we apply \cite[Theorem 3.3]{CCCGW18} by taking $I=(h)$ and $t=h$.
    Let 
    $$\tilde{I} = (F^{\perp}:(h)) +(h)\,.$$
    Then we get 
    $$|\mathbb X|\geqslant \frac{1}{{\rm deg}(h)}\sum_{i\geqslant 0} \HF(T/\tilde I, i)\,.$$
    
    Let $J=(f_0,f_1,\ldots, f_{k-1},f_{k+1}, \ldots, f_n)$. 
    Then $F^{\perp} = J+(h^2g)$. Since $f_0,\ldots, f_{k-1}, h, f_{k+1}, \ldots, f_n$ 
    is also a regular sequence, applying Lemma \ref{Lemma:ColonRegularSequence} we obtain
    \begin{align*}
        \tilde{I}&= (F^{\perp}:(h)) +(h)\\
            &=((J+(h^2g)):(h))+(h)\\
            &=((J:(h))+(hg))+(h)\\
            &=(J:(h))+(h)\\
            &=(f_0,\ldots, f_{k-1}, h, f_{k+1}, \ldots, f_n)\,
    \end{align*} 
    as $h$ is regular on $T/J$.
    Hence 
    \begin{equation}\label{eq:e-comp}
    |\mathbb X|\geqslant \frac{1}{{\rm deg}(h)}\sum_{i\geqslant 0} \HF(T/\tilde I, i) = \prod_{i=0, i\neq k}^n \deg f_i  =  \frac{1}{a_k}\prod_{i=0}^n a_i \,. 
    \end{equation}
    This shows that 
    $$\rk(F) \geqslant  \frac{1}{a_k}\prod_{i=0}^n a_i\,.$$
    This completes the proof of the Proposition.
\end{proof}

Using the same idea as in the proof of \cite[Lemma 4.13]{CCCGW18}, we prove the following 
Proposition. For a homogeneous ideal $I$, we shall denote by $I_a$ the space 
of forms in $I$ of degree $a$. 

\begin{proposition}\label{upper bound k-1 reduced}
    Let $F^\perp = (f_0, f_1, \ldots,f_n) \subseteq T$ be a complete intersection.
    Let $\deg f_i = a_i$ and assume that  $a_0 \leqslant a_1\leqslant \ldots \leqslant a_n$. 
    For some  $0 \leq k \leq n$, if $V(f_0,f_1,\ldots,f_{k-1})\subset \mathbb P^n$ is a reduced subscheme, then 
    $$\rk(F) \leqslant  \frac{1}{a_k}\prod\nolimits_{i=0}^n a_i\,.$$
\end{proposition}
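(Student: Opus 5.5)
We construct an explicit apolar set of the right size and invoke the Apolarity Lemma~\ref{Lemma:Apolarity}, following the strategy of \cite[Lemma 4.13]{CCCGW18}. The idea is to cut the reduced complete intersection $V(f_0,\ldots,f_{k-1})$ with $n-k$ further hypersurfaces of degrees $a_{k+1},\ldots,a_n$, chosen inside $F^\perp$. This produces a reduced zero-dimensional complete intersection $\mathbb X$ with
\[
|\mathbb X|=a_0\cdots a_{k-1}\,a_{k+1}\cdots a_n=\tfrac{1}{a_k}\prod_{i=0}^n a_i ,
\]
and $I(\mathbb X)\subset F^\perp$, which gives the stated bound.

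Concretely, put $J_0=(f_0,\ldots,f_{k-1})$. The scheme $V(J_0)$ is a reduced complete intersection of dimension $n-k$ in $\mathbb P^n$; it is a complete intersection because $f_0,\ldots,f_n$ is a regular sequence. For $j=k+1,\ldots,n$ we choose inductively forms
\[
g_j \in (F^\perp)_{a_j} \quad\text{with}\quad g_j = f_j+\sum_{i\le k} c_{ij} f_i ,
\]
where each $c_{ij}$ is a generic form of degree $a_j-a_i$; in particular $g_j$ may involve $f_k$. At each step we require that $V(f_0,\ldots,f_{k-1},g_{k+1},\ldots,g_j)$ be reduced of the expected dimension. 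Since every $g_j$ lies in $F^\perp$, the ideal $(f_0,\ldots,f_{k-1},g_{k+1},\ldots,g_n)$ is contained in $F^\perp$. If this ideal is saturated and radical, it equals $I(\mathbb X)$ for a set $\mathbb X$ of points of the stated cardinality; radicality holds because a reduced complete intersection is arithmetically Cohen--Macaulay, so its ideal is saturated.

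The key step is a Bertini-type argument. Restricted to the current reduced scheme $Y_{j-1}=V(f_0,\ldots,f_{k-1},g_{k+1},\ldots,g_{j-1})$, the linear system spanned by the forms $f_j+\sum_{i\le k}c_{ij}f_i$ has base locus contained in
\[
B=V(f_0,\ldots,f_k,f_j)\cap Y_{j-1}.
\]
We need a general member to cut $Y_{j-1}$ in a reduced scheme of codimension one.
\begin{itemize}
\item \emph{Codimension.} This follows from the regular-sequence property: the full sequence $f_0,\ldots,f_n$ is regular, and a generic combination stays regular modulo the previous forms.
\item \emph{Reducedness.} By Bertini, reducedness can fail only along the base locus, so one must check the intersection is generically reduced and satisfies Serre's condition $S_1$ there.
\end{itemize}
Here the hypothesis that $f_k$ is included is what helps. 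The members $c_{kj}f_k$ with $c_{kj}$ generic of degree $a_j-a_k\ge 0$ vary freely, and since $f_k$ (together with $f_0,\ldots,f_{k-1}$) defines a proper subscheme of $Y_{j-1}$, the base locus $B$ has smaller dimension than the cut. At the final step the cut is zero-dimensional and $B=V(f_0,\ldots,f_n)=\emptyset$ because $F^\perp$ is Artinian. Inductively, $B$ should have codimension at least $2$ in $Y_{j-1}$, so a general member gives a reduced intersection away from $B$ and hence everywhere: the intersection is Cohen--Macaulay, so there are no embedded components, and it has no components inside $B$ for dimension reasons.

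The main obstacle is making this Bertini argument rigorous at the intermediate stages, namely controlling $B$ so that generic reducedness is not lost along it. This is where the argument of \cite[Lemma 4.13]{CCCGW18} will be adapted. A cleaner alternative I would try first is to skip the intermediate stages: work directly on the reduced scheme $Y=V(J_0)$ and choose all $g_{k+1},\ldots,g_n$ simultaneously as general elements of the base-point-free-off-$V(F^\perp)$ systems $(F^\perp)_{a_j}$ restricted to $Y$. Since $V(F^\perp)=\emptyset$, each system $(F^\perp)_{a_j}|_Y$ with $a_j\ge a_k$ is base-point free on $Y$; it contains $f_k\cdot(\text{forms of degree }a_j-a_k)$, and $f_k$ together with these forms has no common zero on $Y$ once $f_{k+1},\ldots,f_n$ are included. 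Bertini's theorem for base-point-free systems on a reduced scheme over $\mathbb C$ then gives, for general choices, a reduced complete intersection of the correct degree, namely the desired finite set $\mathbb X$ of $\frac{1}{a_k}\prod_i a_i$ points.
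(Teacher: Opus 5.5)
Your main argument is correct and follows the same strategy as the paper: cut the reduced complete intersection $Y=V(f_0,\ldots,f_{k-1})$ by general members of $F^\perp$ of degrees $a_{k+1},\ldots,a_n$ and apply Bertini. The difference lies in how the members are chosen.

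You choose $g_{k+1},\ldots,g_n$ in increasing order from the restricted systems $f_j+\sum_{i\le k}c_{ij}f_i$. Since $g_m\equiv f_m$ modulo $(f_0,\ldots,f_k)$, the base locus on $Y_{j-1}$ is exactly $V(f_0,\ldots,f_j)$, of dimension $n-j-1$. So it has codimension exactly $2$ in $Y_{j-1}$; your ``should'' is a fact, and your argument via Cohen--Macaulayness plus generic reducedness goes through. The same computation shows the regular-sequence property holds for \emph{every} choice of the $c_{ij}$, since adding $f_k$ yields $V(f_0,\ldots,f_j)$.

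The paper instead picks $g_n,g_{n-1},\ldots,g_{k+1}$ in \emph{decreasing} order, each general in the full space $(F^\perp)_{a_j}$. With that order each system is base-point free on the current scheme $Y\cap V(g_n,\ldots,g_{j+1})$, so no base locus ever appears. Bertini gives smoothness on the smooth locus $U$ of $Y$. The singular locus $Z$ has dimension at most $n-k-1$ because $Y$ is reduced; it drops one dimension per cut and is empty after $n-k$ cuts. Hence $\mathbb X$ is directly smooth of dimension $0$, and Serre's criterion is never needed.

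Your ``cleaner alternative'', however, is wrong as stated. The base locus of $(F^\perp)_{a_j}$ on $Y$ is $V(f_i : a_i\le a_j)\cap Y$, which is nonempty whenever $a_j<a_n$. For example, take $n=2$, $k=0$, $a_0=a_1=2<a_2=3$: then $(F^\perp)_2=\langle f_0,f_1\rangle$ has base locus $V(f_0,f_1)\neq\emptyset$ in $\mathbb P^2$. Base-point freeness holds only after the higher-degree forms have been cut. That is exactly why the paper chooses from $g_n$ downward; a ``simultaneous'' choice is justified only when argued in that order.
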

\begin{proof}
    Let $Y=V(f_0,f_1,\ldots,f_{k-1})$ and  
    let $Z\subset Y$ denote the singular locus. 
    If $k=0$ then we mean $Y=\mathbb P^n$ and so 
    $Z=\emptyset$. 
    
    By our hypothesis, we get that 
    ${\rm dim}(Y)=n-k$ and ${\rm dim}(Z)\leqslant n-k-1$. 
    Consider the space of forms $(F^{\perp})_{a_n}$. This defines a base point free 
    linear system on $Y$ and so also on $Z$. 
    Let $U:=Y\setminus Z$. For general $g_n$ in $(F^{\perp})_{a_n}$ we see that 
    $U\cap V(g_n)$ is smooth and ${\rm dim}(Z\cap V(g_n))\leqslant n-k-2$.
    Next consider the space of forms ${(F^{\perp})}_{a_{n-1}}$. This defines a base point 
    free linear system on $Y\cap V(g_n)$. For general $g_{n-1}$ in ${(F^{\perp})}_{a_{n-1}}$ we see that 
    $U\cap V(g_n,g_{n-1})$ is smooth and ${\rm dim}(Z\cap V(g_n,g_{n-1}))\leqslant n-k-3$.
    Proceeding in this fashion we see that 
    $U\cap V(g_n,\ldots,g_{k+1})$ is smooth and 
    ${\rm dim}(Z\cap V(g_n,\ldots,g_{k+1}))\leqslant -1$, that is, it is empty.
    It follows that 
    $$\mathbb X:=V(f_0,f_1,\ldots,f_{k-1},g_{k+1},\ldots,g_n)\subset \mathbb P^n$$
    is a smooth subscheme of dimension 0. Clearly, $I(\mathbb X)\subset F^\perp$ and 
    $|\mathbb X|=\frac{1}{a_k}\prod\nolimits_{i=0}^n a_i$. Since $\rk(F)\leqslant |\mathbb X|$, 
    this completes the proof of the Proposition.  
\end{proof}

We now prove the main result of this section.
\begin{theorem}\label{thm2}
    Let $F^\perp = (f_0, f_1, \ldots,f_n) \subseteq T$ be a complete intersection.
    Let $\deg f_i = a_i$ and assume that  $a_0 \leqslant a_1\leqslant \ldots \leqslant a_n$. 
    Assume that $f_i$ is non-reduced for some $i$. Let $k$ be the smallest 
    such that $f_k$ is not reduced and assume that the subscheme 
    $V(f_0,f_1,\ldots,f_{k-1})\subset \mathbb P^n$
    is reduced for $0 \leq k \leq n$. Then $\rk(F)=\frac{1}{a_k}\prod\nolimits_{i=0}^n a_i$.
\end{theorem}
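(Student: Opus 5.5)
The plan is to combine the two preceding propositions, one for each inequality.

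For the lower bound, the hypotheses of Proposition~\ref{Theorem:Rank CI non reduced} hold verbatim. $F^\perp=(f_0,\ldots,f_n)$ is a complete intersection with $a_0\leqslant\cdots\leqslant a_n$, some generator is non-reduced, and $k$ is chosen as the smallest index with $f_k$ non-reduced. That proposition therefore gives
\[
\rk(F)\geqslant \frac{1}{a_k}\prod\nolimits_{i=0}^n a_i .
\]

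For the upper bound, we invoke Proposition~\ref{upper bound k-1 reduced} with the same index $k$. It only requires that $V(f_0,\ldots,f_{k-1})\subset\mathbb P^n$ be reduced, which is assumed in the theorem. It then yields
\[
\rk(F)\leqslant \frac{1}{a_k}\prod\nolimits_{i=0}^n a_i .
\]
Combining the two inequalities gives the stated equality.

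The only point needing care is consistency of conventions between the two propositions. In both, the degrees are ordered increasingly, and the index $k$ is the same. The case $k=0$ is also covered, since Proposition~\ref{upper bound k-1 reduced} interprets $V(\emptyset)$ as $\mathbb P^n$, which is reduced. So there is no real obstacle: the substantive work (the e-compatibility computation of $\tilde I$, and the Bertini-type construction of a reduced apolar set of the right cardinality) has already been carried out in those propositions.
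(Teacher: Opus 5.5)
Your proposal is correct and is exactly the paper's proof: the theorem follows by combining the lower bound of Proposition~\ref{Theorem:Rank CI non reduced} with the upper bound of Proposition~\ref{upper bound k-1 reduced}, applied with the same index $k$. Your handling of $k=0$ through the convention $V(\emptyset)=\mathbb P^n$ matches the convention the paper uses.
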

\begin{proof}
    This follows from Proposition \ref{Theorem:Rank CI non reduced}
    and Proposition \ref{upper bound k-1 reduced}.
\end{proof}
\begin{remark}
    The hypothesis that $V(f_0,f_1,\ldots,f_{k-1})\subset \mathbb P^n$ is a reduced subscheme
    is necessary. See Example \ref{counterexample}.
\end{remark}

The following result shows that if $V(f_0, \ldots, f_{n-1})$ is reduced, then without the assumption of non-reducedness on $f_n$, we can compute the Waring rank of $F$.

\begin{theorem}\label{thm1}
    Let $F^\perp=(f_0,f_1,\ldots,f_n)$ be a complete intersection. 
    Let $a_i={\rm deg}(f_i)$ and assume that $a_0\leqslant a_1\leqslant \ldots\leqslant a_n$.
    If the subscheme $V(f_0,f_1,\ldots,f_{n-1})\subset \mathbb P^n$
    is reduced, then 
    $\rk(F)=\prod\nolimits_{i=0}^{n-1}a_i$.
\end{theorem}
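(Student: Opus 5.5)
The upper bound is immediate from Proposition \ref{upper bound k-1 reduced} applied with $k=n$. The hypothesis there is exactly that $V(f_0,\ldots,f_{n-1})$ is reduced, and the conclusion reads
\[
\rk(F)\leqslant \frac{1}{a_n}\prod\nolimits_{i=0}^n a_i=\prod\nolimits_{i=0}^{n-1}a_i .
\]
Concretely, the apolar set is $\mathbb X=V(f_0,\ldots,f_{n-1})$ itself. It is a zero-dimensional complete intersection, reduced by assumption, and its ideal $(f_0,\ldots,f_{n-1})$ lies in $F^\perp$. By B\'ezout it consists of $\prod_{i<n}a_i$ distinct points, so the whole content of the theorem is the lower bound.

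For the lower bound I would use e-compatibility (Theorem \ref{Thm:Ecomputability}) in the spirit of the monomial case, with $I=(X_j)$ for a linear form. Take any apolar set $\mathbb X$ and choose a linear form $\ell$ that vanishes at no point of $\mathbb X$. Then $\ell$ is a nonzerodivisor on $T/I(\mathbb X)$, since the ideal of a finite reduced set is saturated with associated primes the point ideals. This suggests taking $e=1$, $I=(\ell)$ and $t=\ell$, but then $I(\mathbb X):I=I(\mathbb X)$ and Theorem \ref{Thm:Ecomputability} only gives $|\mathbb X|\geqslant \sum_i \HF(T/F^\perp+(\ell),i)$, which is the wrong quantity. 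So instead I would use the simpler standard fact underlying that theorem: if $\ell$ is regular on $T/I(\mathbb X)$, then
\[
|\mathbb X|=\sum_i \HF\bigl(T/(I(\mathbb X)+(\ell)),i\bigr)\geqslant \sum_{i\leqslant D}\HF\bigl(T/(F^\perp+(\ell)),i\bigr)
\]
for any cutoff $D$, because $I(\mathbb X)+(\ell)\subseteq F^\perp+(\ell)$. This bound is only useful if $\ell$ can be chosen so that $T/(F^\perp+(\ell))$ is large, for instance if $\ell$ divides $f_n$, or more generally if modulo $\ell$ the generator $f_n$ becomes dependent on the others. That cannot be arranged for arbitrary $\ell$.

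The better route is to exploit the degree range. The Hilbert function of $T/F^\perp$ agrees with that of the complete intersection $T/(f_0,\ldots,f_{n-1})$ in degrees below $a_n$. The standard argument is: for $\ell$ general with respect to $\mathbb X$, $I(\mathbb X)$ contains no forms of degree $<a_n$ beyond those in $(f_0,\ldots,f_{n-1})$. Then $\HF(T/I(\mathbb X),i)\geqslant \HF(T/(f_0,\ldots,f_{n-1}),i)$ for $i<a_n$. This Hilbert function stabilizes at $\prod_{i<n}a_i$ once $i\geqslant \sum_{i<n}(a_i-1)+1$, and that threshold is at most $a_n$ only when $a_n$ is large. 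Since $\HF(T/I(\mathbb X),i)\leqslant|\mathbb X|$ always holds, in that case $|\mathbb X|\geqslant\prod_{i<n}a_i$.

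The main obstacle is the case where $a_n$ is small relative to $\sum_{i<n}(a_i-1)$. There I would instead use the symmetry of the Gorenstein Hilbert function and e-compatibility with $I=(f_0,\ldots,f_{n-1})$ or a suitable colon. The first thing I would try is $I=(\ell)$ with $t=\ell$ applied to $I(\mathbb X):(\ell^{m})$, to shift degrees until the $(f_0,\ldots,f_{n-1})$ part dominates.
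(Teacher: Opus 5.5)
Your upper bound is correct and is the paper's: Proposition~\ref{upper bound k-1 reduced} with $k=n$, where the apolar set is $V(f_0,\ldots,f_{n-1})$ itself. The lower bound, however, is not proved. Your Hilbert-function comparison is valid, but it only settles the case $a_n-1\geqslant\sum_{i<n}(a_i-1)$. For the remaining, typical case you only list things to try, and these cannot work in general.

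Any bound built from a linear form $\ell$, with or without colons by $\ell^m$, is at most $\dim_{\mathbb C}T/\bigl((F^\perp:\ell^m)+(\ell)\bigr)\leqslant\dim_{\mathbb C}T/(F^\perp+(\ell))$. Take $n=2$ and $f_0,f_1,f_2$ general ternary forms of the same degree $a\geqslant 4$, so that $V(f_0,f_1)$ consists of $a^2$ reduced points. A dimension count shows that the restrictions of $f_0,f_1,f_2$ to any line $\ell=0$ stay linearly independent. Then $T/(F^\perp+(\ell))$ has length strictly less than $a^2=\rk(F)$ for \emph{every} $\ell$. Likewise, e-compatibility with $I=(f_0,\ldots,f_{n-1})\subseteq F^\perp$ gives only the trivial bound, since $F^\perp:I=T$.

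The missing idea is to cut by a form of degree $a_n$ taken from $F^\perp$ itself, not by a linear form. Because $F^\perp$ is generated in degrees $\leqslant a_n$ and $V(F^\perp)=\emptyset$, the linear system $(F^\perp)_{a_n}$ is base-point free. So a general $g\in(F^\perp)_{a_n}$ vanishes at no point of $\mathbb X$ and is a nonzerodivisor on $T/I(\mathbb X)$. Then $T/(I(\mathbb X)+(g))$ has length $a_n|\mathbb X|$. Since $I(\mathbb X)+(g)\subseteq F^\perp$, it surjects onto $T/F^\perp$, which has length $\prod_{i=0}^n a_i$. Hence $|\mathbb X|\geqslant\prod_{i=0}^{n-1}a_i$, with no case distinction on $a_n$ versus $\sum_{i<n}(a_i-1)$. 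This is the Ranestad--Schreyer bound \cite[Proposition 1]{RS11}, which is exactly what the paper invokes for the lower bound.
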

\begin{proof}
    From Proposition \ref{upper bound k-1 reduced} it follows that 
    $\rk(F)\leqslant \prod\nolimits_{i=0}^{n-1}a_i$. 
    Since $F^{\perp}$ is generated in degree $a_n$, 
    by \cite[Proposition 1]{RS11} we have any scheme apolar 
    to $F$ has length at least  $ \prod\nolimits_{i=0}^{n-1}a_i.$ 
    Hence $\rk(F)\geqslant \prod\nolimits_{i=0}^{n-1}a_i$.  
    This completes the proof of the Theorem. 
\end{proof}
The following result is a consequence of Theorem \ref{thm2}.
\begin{corollary}\label{Thm:MainRank}
    Let $F \in T$ be such that $F^{\perp}=(X_0^{a_0}+G, \, X_1^{a_1}, \, X_2^{a_2}, \, \ldots, \, X_n^{a_n})$ where $G$ is a form of degree $a_0$ with $ \deg_{X_0} G <a_0$ and $2 \leq a_1 \leq a_2 \leq \cdots \leq a_n.$ Let $\lambda = \min\{a_0, a_1\}$.
    Then
    \[\rk F =   \begin{dcases}
                    {\frac{1}{\lambda}{\displaystyle\prod_{j=0}^n a_j}} & \text{ if } (X_0^{a_0}+G) \mbox{ has a multiple factor. } \\
                    & \\
                     {a_0{\displaystyle\prod_{j=2}^n a_j}} & \mbox{if } (X_0^{a_0}+G) \mbox{ does not have a multiple factor. }
                \end{dcases}\]
\end{corollary}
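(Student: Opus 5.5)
The plan is to deduce both cases from Theorem \ref{thm2}. We relabel the generators of $F^{\perp}$ so that their degrees are non-decreasing, and we use the freedom to order generators of equal degree to control which one comes first. Two observations are used throughout.
\begin{itemize}
\item Each $X_j^{a_j}$ with $j\geq 1$ is non-reduced, since $a_j\geq 2$.
\item "Non-reduced" for a generator means it has a multiple factor. So $f:=X_0^{a_0}+G$ is non-reduced exactly in the first case of the statement.
\end{itemize}
The ideal is a complete intersection by hypothesis, so Theorem \ref{thm2} applies once we identify the index $k$ and check that $V$ of the earlier generators is reduced.

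\emph{Case 1: $f$ has a multiple factor.} Every generator is now non-reduced, so $k=0$, whichever generator of minimal degree is placed first. The condition on $V(f_0,\ldots,f_{k-1})$ is vacuous: the empty intersection is $\mathbb P^n$, which is reduced. The minimal degree is $\min\{a_0,a_1\}=\lambda$, because $a_1\leq a_2\leq\cdots\leq a_n$. Theorem \ref{thm2} then gives $\rk F=\frac{1}{\lambda}\prod_{j=0}^n a_j$.

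\emph{Case 2: $f$ is squarefree.} We split on the relative size of $a_0$ and $a_1$.
\begin{itemize}
\item If $a_0\leq a_1$, order the generators as $f, X_1^{a_1},\ldots,X_n^{a_n}$; when $a_0=a_1$ we deliberately put $f$ first. Then $f_0=f$ is reduced and $f_1=X_1^{a_1}$ is not, so $k=1$. The scheme $V(f)$ is a hypersurface defined by a squarefree form, hence reduced. Theorem \ref{thm2} yields $\rk F=\frac{1}{a_1}\prod_{j=0}^n a_j=a_0\prod_{j=2}^n a_j$.
\item If $a_0>a_1$, the smallest degree is $a_1$ and its generator $X_1^{a_1}$ is non-reduced. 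Placing it first gives $k=0$ with the vacuous reducedness condition. We obtain $\rk F=\frac{1}{a_1}\prod_{j=0}^n a_j$, which is again $a_0\prod_{j=2}^n a_j$.
\end{itemize}
So the second formula holds in both subcases.

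The only delicate point is ordering. Theorem \ref{thm2} is stated for a fixed non-decreasing ordering, with $k$ the first non-reduced index. We must therefore check that the theorem holds for any such ordering. This is immediate: Propositions \ref{Theorem:Rank CI non reduced} and \ref{upper bound k-1 reduced} never use anything beyond the degree inequalities. We must also check that the tie-breaking above is legitimate, in particular putting $f$ first when $a_0=a_1$ in Case 2.

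A small degenerate case is $a_0=1$. There $f$ is linear, hence reduced, and falls under Case 2 with $a_0\leq a_1$; the argument goes through unchanged.
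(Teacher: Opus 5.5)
Your proof is correct and follows the paper's argument: the same three cases (a multiple factor with $k=0$; squarefree with $a_0\le a_1$ and $k=1$; squarefree with $a_0>a_1$ and $k=0$), each deduced from Theorem \ref{thm2}. Your remarks on tie-breaking when $a_0=a_1$ and on $a_0=1$ are harmless additions, and you rightly invoke Theorem \ref{thm2} in the first case, where the paper's text cites Theorem \ref{thm1} by a slip.
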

\begin{proof}
    We divide the proof into the following cases: 
    \newline\noindent{\underline{\it Case}}(i). Assume $(X_0^{a_0}+G)$ has a multiple factor. In this case, all the generators of $F^{\perp}$ are non-reduced. By applying Theorem \ref{thm1} with $k=0$ case, we obtain the result. 
    \newline\noindent{\underline{\it Case}}(ii). Assume $(X_0^{a_0}+G)$ does not have a multiple factor. In this case, we consider the following sub-cases:
    \newline{\underline{\it Case}}(ii.a). Suppose $a_0 \leq a_1$. Since $(X_0^{a_0}+G)$  is square-free, $V(X_0^{a_0}+G)$ is reduced. Moreover, $X_1^{a_1}$ is non-reduced. Hence, we apply Theorem \ref{thm2} with $k=1$ case, we obtain
    \[
    \rk F = {a_0{\displaystyle\prod_{j=2}^n a_j}}.
    \]
    {\underline{\it Case}}(ii.b). Suppose $a_0 > a_1$. In this case, the least degree generator of $F^{\perp}$ is $X_1^{a_1}$ which is non-reduced. Hence, we apply Theorem \ref{thm2} with $k=0$ case, we obtain \[
    \rk F = {a_0{\displaystyle\prod_{j=2}^n a_j}}.
    \]
\end{proof}

The following example demonstrates Corollary \ref{Thm:MainRank}.
\begin{example}
    Consider $I=(X^2+Y^2+Z^2, Y^3, Z^3)$ and $F= x^5-5x^3y^2-5x^3z^2+15xy^2z^2$.  By Macaulay2 we verify that $F^{\perp} = I.$ Hence, by Corollary \ref{Thm:MainRank} we obtain $\rk F = 6.$
\end{example}

\section{Waring rank of binomials whose annihilator ideal is a CI}\label{sec:Rank of binomials}
In this section,  we compute the Waring rank of binomials whose annihilator ideal is a complete intersection. In addition, we compute the Waring rank of forms that are products of a binary form and a monomial in a different set of variables. Recently in \cite{AltafiEtall25} the authors have computed the annihilator ideal of binomial forms in three variables. Subsequently,  the papers \cite{GR25, Shibata25} characterize the binomials whose annihilator ideal is a complete intersection. In fact, they have computed $F^\perp$ in this case. 
This computation shows that such an ideal is a quasi-monomial complete intersection in ``most of the cases''. As a consequence of Theorem \ref{Thm:MainRank} we compute the Waring rank of binomials in these cases.  Further, we compute the Waring rank of all binomial forms whose annihilator ideal is a complete intersection (CI).
Consider a binomial 
\[F = x_0^{a_0}x_1^{a_1}\cdots x_n^{a_n}(x_0^{b_0} \cdots x_l^{b_l} - x_{l+1}^{b_{l+1}} \cdots x_n^{b_n}). \]  A necessary condition (see \cite{GR25, Shibata25}) for $F^{\perp}$ to be a CI is $l=0.$ Therefore we consider the binomial of type $$F =x_0^{a_0}x_1^{a_1}\cdots x_n^{a_n}(x_0^{b_0} - x_1^{b_1}\cdots x_n^{b_n}).$$ Rearranging the variables if required, we can write $F = F_1F_2$ where $F_1 = x_0^{a_0}x_1^{a_1}\cdots x_k^{a_k}(x_0^{b_0} - x_1^{b_1}\cdots x_k^{b_k})$ is a binomial with  $b_i \neq 0$ for $1 \leq i \leq k $, $b_{k+1}= \cdots = b_n = 0$, and $F_2 = x_{k+1}^{a_{k+1}}\cdots x_n^{a_n}$ for some $1 \leq k \leq n$.  Moreover, by \cite[Proposition 3.1]{DFP20-ApolarAlgebra} we have
\begin{align}\label{EQ:SumofPerp}
    F^{\perp} = (F_1^{\perp})_{S_1} + (F_2^{\perp})_{S_2}
\end{align}
where $S_1 = \mathbb{C}[x_0, \ldots, x_k]$ and $S_2 = \mathbb{C}[x_{k+1}, \ldots, x_n].$
Note that $(F_2^{\perp})_{S_2}$ is a monomial complete intersection. Hence  $F^{\perp}$ is a CI whenever $(F_1^{\perp})_{S_1}$ is a CI.
If $k=1,$ then $F_1$ is a binary binomial and $(F_1^{\perp})_{S_1}$ is always a complete intersection, but may not be a quasi-monomial complete intersection.
More generally, we compute the Waring rank when $F_1$ is any binary form and $F_2$ is a monomial.
Using a description of annihilator ideal given by K. Shibata in \cite{Shibata25}, we obtain the Waring rank of binary binomials, and hence reprove the result \cite[Theorem 3.1]{BM21}.
If $k \geq 2,$ then by \cite{GR25, Shibata25} we have a characterization for $(F_1)^{\perp}$ to be a CI. In fact, it is a quasi-monomial complete intersection. Hence, we use Corollary \ref{Thm:MainRank} to obtain the Waring rank.

We first compute the Waring rank of binary binomials.  For this, we recall a result by K. Shibata \cite{Shibata25} which computes the annihilator ideal of a binary binomial.
\begin{theorem}[\cite{Shibata25}, Lemma 2.11]\label{Thm:Shibata}
    Let  $S=\mathbb{C}[x_0, x_1]$, and $T=\mathbb{C}[X_0, X_1]$ be the standard graded polynomial rings. Let 
    \[
    F=x_0^{a_0}x_1^{a_1}(x_0^{b_0}-x_1^{b_0})
    \]
    where $b_0, a_0,a_1 \in \mathbb{N}$ with $b_0\geq 1,$ $0 \leq  a_0\leq a_1.$
    Let \[
    v = \min \{i \mid a_0+1 \leq i b_0\}, \quad w = \min\{i \mid a_1+1 \leq i b_0\}.
    \]
    Then 
    \[
    F^{\perp} = \begin{dcases}
        \left(X_0^{a_0+b_0+1}, \, M=\sum_{i=0}^vX_0^{ib_0}X_1^{a_1+1-ib_0}\right) \quad \quad \quad \quad \quad \quad\quad \quad \quad \quad \quad\textrm{  if } v<w; \\
        \left( P= \sum_{i=0}^v X_0^{ib_0}X_1^{(v-i)b_0}, \, Q= \sum_{i=0}^{v-1} X_0^{a_0+1-(v-1-i)b_0}X_1^{a_1+1-ib_0} \right) \quad \quad\textrm{if } v=w.
    \end{dcases}
    \]
\end{theorem}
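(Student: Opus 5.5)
The plan is to use the structure of Gorenstein ideals in two variables instead of computing $F^\perp$ directly. Set $d=\deg F=a_0+a_1+b_0$. For a binary form of degree $d$, $F^\perp$ is a complete intersection $(g_1,g_2)$ with $\deg g_1+\deg g_2=d+2$. Conversely, suppose $I=(g_1,g_2)\subseteq F^\perp$ is a complete intersection with $\deg g_1+\deg g_2=d+2$. Then $T/I$ is Artinian Gorenstein with socle degree $d$, so $I=G^\perp$ for some form $G$ of degree $d$, and $I_d$ has codimension one in $T_d$. Since $I_d\subseteq (F^\perp)_d$ and $F\neq 0$, both have codimension one, so $I_d=(F^\perp)_d$. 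Hence $F$ is a scalar multiple of $G$, and $I=F^\perp$. So in each case it suffices to check three things for the displayed pair: (a) the degrees sum to $d+2$, (b) both elements annihilate $F$, and (c) they form a regular sequence, i.e.\ have no common factor.

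Step (a) is bookkeeping. If $v<w$, then $\deg X_0^{a_0+b_0+1}+\deg M=(a_0+b_0+1)+(a_1+1)=d+2$. If $v=w$, then $\deg P=vb_0$ and $\deg Q=a_0+a_1+2-(v-1)b_0$, so the sum is again $d+2$.

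Step (c) is also short. For $v<w$, reducing $M$ modulo $X_0$ leaves $X_1^{a_1+1}$, so $M$ is coprime to $X_0^{a_0+b_0+1}$. For $v=w$, note that $P=(X_0^{(v+1)b_0}-X_1^{(v+1)b_0})/(X_0^{b_0}-X_1^{b_0})$. Its linear factors are therefore $X_0-\zeta X_1$ with $\zeta^{(v+1)b_0}=1$ and $\zeta^{b_0}\neq 1$. I would substitute such a point into $Q$, factor out a power of $\zeta$, and reduce to a geometric sum in $\zeta^{b_0}$. Using $\zeta^{(v+1)b_0}=1$, this sum equals a nonzero expression of the form $\zeta^{c}(1-\zeta^{-vb_0})/(1-\zeta^{-b_0})\neq 0$; here the condition $v=w$ is what makes the exponents match up.

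Step (b) is the main obstacle. $X_0^{a_0+b_0+1}$ clearly kills $F$, since the $x_0$-degree of $F$ is $a_0+b_0$. For $M$, $P$ and $Q$, I would apply each monomial term to the two monomials $x_0^{a_0+b_0}x_1^{a_1}$ and $x_0^{a_0}x_1^{a_1+b_0}$ of $F$ and show the results telescope. The monomial $X_0^{ib_0}X_1^{\ast}$ applied to the second monomial of $F$ yields the same monomial as $X_0^{(i+1)b_0}X_1^{\ast-b_0}$ applied to the first, so consecutive terms cancel in pairs. The boundary terms vanish because of the choice of $v$ and $w$: the defining inequalities $a_0+1\le vb_0$ and $a_1+1\le wb_0$ force some exponent in the extreme terms to exceed the available degree. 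The delicate point is the scalars. With the differentiation action the pairwise cancellation produces factorial ratios, so I expect the generators to need suitable nonzero coefficients $c_i$ (or, equivalently, the statement to be read with the contraction action). I would first check the identity for the contraction action, where all coefficients are $1$. I would then note that rescaling each monomial of $F$ by a nonzero constant, which converts between the two actions, rescales the coefficients of the generators without affecting (a) or (c).
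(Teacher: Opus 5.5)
The paper gives no proof of this statement. It is quoted from Shibata's Lemma~2.11, and later the paper uses only the degrees of the generators and whether $M,P,Q$ are square-free. Your argument is therefore an independent, self-contained verification, and for the contraction action it is correct.

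The reduction to (a), (b), (c) is sound. If $I\subseteq F^\perp$ is a complete intersection with degree sum $d+2$, then $T/I$ is Gorenstein with socle $(T/I)_d$. Any nonzero $F^\perp/I$ would contain that socle, which would force $(F^\perp)_d=T_d$, impossible since $F\neq 0$.

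The telescoping in (b) is exact. The exponent vectors satisfy $(a_0+b_0,a_1)-(a_0,a_1+b_0)=(b_0,-b_0)=\beta_{i+1}-\beta_i$. So term $i$ applied to $x_0^{a_0}x_1^{a_1+b_0}$ and term $i+1$ applied to $x_0^{a_0+b_0}x_1^{a_1}$ give literally the same, possibly zero, monomial.

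In (c) you get $Q(\zeta,1)=\zeta^{a_0+1}(1-\zeta^{-vb_0})/(1-\zeta^{-b_0})$, which is nonzero because $\zeta^{vb_0}=\zeta^{-b_0}\neq 1$. One correction: $v=w$ plays no role in (c). It is needed only in (b), to kill $X_1^{vb_0}$ on $x_0^{a_0+b_0}x_1^{a_1}$ via $vb_0=wb_0\geq a_1+1$. Separately, $v<w$ is what makes the exponent $a_1+1-vb_0$ in $M$ positive. You should state these nonnegativity checks explicitly.

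Your worry about scalars is justified: with the differentiation action the paper actually defines, the displayed generators are wrong. For $a_0=0$, $a_1=b_0=1$ one has $F=x_0x_1-x_1^2$ and $M=X_1^2+X_0X_1$, but $M\circ F=-2+1\neq 0$. The correct generator is $X_1^2+2X_0X_1$.

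However, your last sentence needs an argument. Rescaling the coefficients of $P$ and $Q$ independently can create a common factor, so (c) is not automatic. Here is one way to do it:
\begin{enumerate}
\item Put $\psi(x^\alpha)=x^\alpha/\alpha!$ and write $g\cdot G$ for contraction. Then $g\circ\psi(G)=\psi(g\cdot G)$.
\item Hence the differentiation annihilator of $F$ equals the contraction annihilator of $c_1x_0^{a_0+b_0}x_1^{a_1}-c_2x_0^{a_0}x_1^{a_1+b_0}$, where $c_1=(a_0+b_0)!\,a_1!$ and $c_2=a_0!\,(a_1+b_0)!$.
\item The substitution $x_0\mapsto\lambda x_0$ with $\lambda^{b_0}=c_1/c_2$ carries $x_0^{a_0}x_1^{a_1}(x_0^{b_0}-x_1^{b_0})$ to a nonzero multiple of this binomial.
\item Dually, the true generators are $M,P,Q$ (and $X_0^{a_0+b_0+1}$) with $X_0$ replaced by $\lambda^{-1}X_0$.
\end{enumerate}
This is a linear change of coordinates, so degrees, coprimality and square-freeness are all preserved. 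Square-freeness is exactly what the paper's subsequent rank computation relies on.
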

It is easy to see that when the polynomials $M, P, Q$ are square-free. Indeed, 
\[M(X_0^{b_0} -X_1^{b_0}) = X_0^{(v+1)b_0}X_1^{a_1+1-vb_0} - X_1^{a_1+1+b_0}. \]
Whenever $a_1+1-vb_0 \geq 2$, we obtain $M$ has a multiple factor. Similarly,
\[
P(X_0^{b_0} -X_1^{b_0})= X_0^{(v+1)b_0} - X_1^{(v+1)b_0}.
\]
Hence $P$ is always square-free. Note that
\[
Q(X_0^{b_0} -X_1^{b_0}) = X_0^{a_0+1+b_0}X_1^{a_1+1-(v-1)b_0} - X_0^{a_0+1-(v-1)b_0}X_1^{a_1+1+b_0}.
\]
Hence $Q$ has a multiple factor if $a_0+1-(v-1)b_0 \geq 2$ or $a_1+1-(v-1)b_0 \geq 2,$ otherwise it is square free. By Sylvester's algorithm \cite{sylvester1851lx,Sylvester1851}, the result below is a direct consequence. 
\renewcommand{\arraystretch}{1.5}
\begin{theorem}\label{Thm:Binary}
    Let $F=x_0^{a_0}x_1^{a_1}(x_0^{b_0}-x_1^{b_0})$ be a binary binomial with the notation as in Theorem \ref{Thm:Shibata}. Then we have the following:
    \begin{center}
        \begin{tabular}{|c|c|c|}
        \hline
           $v<w$  & $a_1+1 < a_0+b_0+1$ and $a_1+1 -v b_0 \geq 2$ & $\rk F = a_0+b_0+1 $\\
           \cline{2-3}
             & Otherwise & $\rk F = a_1+1.$ \\
             \hline
             $v=w$ & $vb_0 \geq   a_0+a_1+2-(v-1)b_0$, $a_1+1-(v-1)b_0 \leq 1$ & $\rk F = a_0+a_1+2-(v-1)b_0$ \\
             \cline{2-3}
             & \textrm{Otherwise} & $\rk F = vb_0.$ \\
             \hline
        \end{tabular}
    \end{center}
\end{theorem}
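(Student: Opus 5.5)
The proof is a direct application of Sylvester's theorem for binary forms, using the explicit generators from Theorem \ref{Thm:Shibata}. Recall the binary case of Theorem \ref{thm1}/Theorem \ref{thm2}. If $F^\perp=(g_1,g_2)$ with $\deg g_1=d_1\le d_2=\deg g_2$, then $\rk F=d_1$ when $g_1$ is square-free. Otherwise $\rk F=d_2$, since a general element of $(F^\perp)_{d_2}$ is square-free. Here $d_1+d_2=\deg F+2=a_0+a_1+b_0+2$. So in each case it suffices to identify the lower-degree generator and decide whether it has a multiple factor.

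\emph{Case $v<w$.} The generators are $X_0^{a_0+b_0+1}$, of degree $a_0+b_0+1$, and $M$, of degree $a_1+1$.
\begin{itemize}
\item If $a_1+1<a_0+b_0+1$, then $M$ is the lower-degree generator. The factorization $M(X_0^{b_0}-X_1^{b_0})=X_0^{(v+1)b_0}X_1^{a_1+1-vb_0}-X_1^{a_1+1+b_0}$ shows that $M$ is square-free exactly when $a_1+1-vb_0\le 1$. The ``if'' direction follows because the right-hand side is then $X_1^{\epsilon}$ times a difference of pure powers, with $\epsilon\le 1$. Hence $\rk F=a_1+1$ when $a_1+1-vb_0\le1$, and $\rk F=a_0+b_0+1$ when $a_1+1-vb_0\ge2$.
\item If $a_1+1\ge a_0+b_0+1$, the lowest-degree generator is $X_0^{a_0+b_0+1}$, a power with exponent at least $2$. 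The exception is the degenerate case $a_0+b_0+1=1$, which is impossible since $b_0\ge 1$. So the rank is the larger degree, $a_1+1$.
\end{itemize}
The two sub-cases together give the first two rows of the table.

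\emph{Case $v=w$.} The generators are $P$, of degree $vb_0$, and $Q$, of degree $a_0+a_1+2-(v-1)b_0$. The identity $P(X_0^{b_0}-X_1^{b_0})=X_0^{(v+1)b_0}-X_1^{(v+1)b_0}$ shows that $P$ is always square-free.
\begin{itemize}
\item If $vb_0\le \deg Q$, then $\rk F=vb_0$.
\item If $vb_0>\deg Q$, then $Q$ is the lower-degree generator. Here $\rk F=\deg Q$ exactly when $Q$ is square-free, and $\rk F=vb_0$ otherwise.
\end{itemize}

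For $Q$, I use $Q(X_0^{b_0}-X_1^{b_0})=X_0^{a_0+1-(v-1)b_0}X_1^{a_1+1-(v-1)b_0}\bigl(X_0^{vb_0}-X_1^{vb_0}\bigr)$. Since $v=w$, $a_0\le a_1$ and $(v-1)b_0\le a_0$, both exponents $a_i+1-(v-1)b_0$ are positive. Also $a_0+1-(v-1)b_0\le a_1+1-(v-1)b_0$. So $Q$ is square-free iff $a_1+1-(v-1)b_0\le1$, which forces both exponents to equal $1$. This recovers the third and fourth rows.

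The main obstacle is the boundary bookkeeping. I must match the strict versus non-strict inequalities in the table with the equal-degree case, where both generators have the same degree and the rank is that common degree. I must also check that $X_0^{b_0}-X_1^{b_0}$ shares no repeated factor with the multiplier, so that square-freeness of the product correctly transfers to $M$ and $Q$. The latter holds since the products above are monomials times $X_0^N-X_1^N$, whose linear factors are distinct and coprime to $X_0X_1$.
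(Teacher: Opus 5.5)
Your proof is correct and follows essentially the same route as the paper: Shibata's generators, the telescoping products with $X_0^{b_0}-X_1^{b_0}$ to decide when $M$, $P$ and $Q$ are square-free, and then Sylvester's algorithm. The only difference is that you spell out the case bookkeeping the paper calls a ``direct consequence'', namely the equal-degree boundary, the non-reducedness of $X_0^{a_0+b_0+1}$, and the reduction of the $Q$ criterion to $a_1+1-(v-1)b_0\le 1$ via $a_0\le a_1$.
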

 Note that the above result is equivalent to \cite[Theorem 3.1]{BM21}. Rewriting the statement in Theorem \ref{Thm:Binary} we get 
 $F = x_0^{a_0+b_0}x_1^{a_1} - x_0^{a_0}x_1^{a_1+b_0}$.
In terms of notation as in \cite[Theorem 3.1]{BM21}, we get
$r = a_0, \alpha = b_0, s = a_1.$ Under this identification, it is easy to verify that both results yield the same Waring rank.

The following lemma shows that, for a binary form, we can always assume the largest degree generator of the annihilator ideal is reduced.
\begin{lemma}\label{lem:binary}
    If $F \in \mathbb{C}[x_0,x_1]_{d}$, 
    then ${F}^{\perp} = (g_0, g_1)$ where $\deg (g_0) \leqslant \deg (g_1)$ and 
    $g_1=\ell^2 g$ for some linear form 
    $\ell$, $g\in \mathbb{C}[x_0,x_1]$.  
\end{lemma}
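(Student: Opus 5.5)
The plan is to use the structure of apolar ideals of binary forms and then modify the top-degree generator so that it acquires a double linear factor. Assume $d\geqslant 1$. In two variables the apolar algebra $T/F^\perp$ is Artinian Gorenstein of codimension two, hence a complete intersection. So $F^\perp=(g_0,g_1)$ with $\deg g_0=a\leqslant \deg g_1=b$, $a+b=d+2$, and $g_0,g_1$ coprime. Since $a\geqslant 1$, we get $b\geqslant 2$. The freedom we exploit is that $(g_0,g_1)=(g_0,\,g_1+hg_0)$ for any $h\in T_{b-a}$, and, when $a=b$, also $(g_0,g_1)=(\lambda g_0+\mu g_1,\, \lambda' g_0+\mu' g_1)$ for any invertible change of basis of the pencil. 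We split into the cases $a<b$ and $a=b$.

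\emph{Case $a<b$.} Pick a point $p\in\mathbb P^1$ with $g_0(p)\neq 0$, and let $\ell$ be the linear form vanishing at $p$. We want $h\in T_{b-a}$ with $\ell^2 \mid g_1+hg_0$. Because $g_0$ is a unit in the local ring at $p$, this amounts to prescribing the class of $h$ modulo $\ell^2$. Concretely, after a linear change of coordinates with $\ell=X_0$, the condition $X_0^2\mid g_1+hg_0$ fixes the coefficients of $X_1^{b-a}$ and $X_0X_1^{b-a-1}$ in $h$. These are two independent linear conditions, so they can be solved; in other words, the restriction map $T_{b-a}\to (T/(\ell^2))_{b-a}\cong\mathbb C^2$ is surjective when $b-a\geqslant 1$. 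Concretely we solve successively for the $X_1^{b-a}$ coefficient, using $g_0(p)\ne0$, and then for the $X_0X_1^{b-a-1}$ coefficient. Replacing $g_1$ by $g_1+hg_0=\ell^2 g$ gives the claim.

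\emph{Case $a=b$.} Here $a=b=(d+2)/2\geqslant 2$, and $(F^\perp)_a=\langle g_0,g_1\rangle$ is a pencil of binary forms of degree $a$. The discriminant of $sg_0+tg_1$ is a homogeneous polynomial in $(s,t)$ of degree $2(a-1)\geqslant 2$. Either it vanishes identically or it has a zero $[s_0:t_0]\in\mathbb P^1$. In both cases some nonzero member $sg_0+tg_1$ of the pencil has a repeated root, i.e. it equals $\ell^2 g$. We complete it to a basis of the pencil and take this member as the new $g_1$ and another member as the new $g_0$; the degrees are unchanged and the ideal is the same.

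The only step needing care is the degree count in the first case, namely that $b-a\geqslant 1$ gives enough freedom modulo $\ell^2$. Also note that the boundary case $a=1$ forces $b=d+1\geqslant 2$, so it is covered by the first case.
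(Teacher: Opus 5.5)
Your proof is correct, and it takes a different route from the paper's.

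The paper treats both cases at once. It picks a linear factor $\ell_0$ of $g_0$ not dividing $g_1$ and sets $\tilde{g}_0=\ell_0^{b-a}g_0$. It then applies the Hurwitz formula to the degree-$b$ map $[\tilde{g}_0:g_1]\colon\mathbb{P}^1\to\mathbb{P}^1$. A ramification point $P$ gives $\lambda=\tilde{g}_0(P)/g_1(P)$, and $\tilde{g}_0-\lambda g_1$ has a repeated root.

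You split by degree instead:
\begin{itemize}
\item For $a<b$ you use the whole $(b-a+1)$-dimensional space of modifications $g_1+hg_0$. You impose a double root at any point $p$ with $g_0(p)\neq 0$ by solving two triangular linear equations with diagonal $g_0(p)$.
\item For $a=b$ you use the discriminant restricted to the pencil. This is the algebraic counterpart of the branch locus that Hurwitz detects.
\end{itemize}

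Your route has three advantages. It is more elementary. It gives slightly more: for $a<b$ the double factor $\ell$ can be any linear form not dividing $g_0$. It also sidesteps a point the paper's argument has to handle with care. One needs a ramification point with both $g_1(P)\neq 0$ and $\tilde{g}_0(P)\neq 0$, so that $\lambda\neq 0$ and $(g_0,\tilde{g}_0-\lambda g_1)=F^\perp$. Hurwitz alone does not guarantee such a point. For example, for $g_0=X_0^a$, $g_1=X_1^b$, $\ell_0=X_0$, the map $[X_0^b:X_1^b]$ ramifies only over $0$ and $\infty$. This case is harmless, since $g_1$ is already non-reduced, but it must be set aside.

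In exchange, the paper's argument is a single, coordinate-free geometric argument covering both cases.

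You are also right to assume $d\geqslant 1$: for $d=0$ one has $F^\perp=(X_0,X_1)$, and the statement fails.
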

\begin{proof}
    It is a well-known fact that for $F \in \mathbb{C}[x_0,x_1]_{d}$, 
    we have ${F}^{\perp} = (g_0, g_1)$ such that $\deg (g_0) \leqslant \deg (g_1)$. We want to show that the generator $g_1$ can be replaced by a polynomial which is not reduced. 
    Since $g_0, g_1$ is a regular sequence, choose a linear form $\ell_0\mid g_0$ and $\ell_0 \nmid g_1.$ Let $\tilde {g_0} =\ell_0^{\deg g_1 - \deg g_0} g_0$. For any $\lambda \in \mathbb{C}\setminus\{0\}$, we have  $h = \tilde {g_0} - \lambda g_1 \in F^{\perp}$ and $F^{\perp} = (g_0, h)$.
    
    We now find $\lambda \in  \mathbb{C}\setminus\{0\}$ such that $h$ is not reduced.
    Consider the map $\varphi: \mathbb P^1\to \mathbb P^1$
defined by $\varphi([a:b])= [\tilde {g_0}(a,b):g_1(a,b)]$. Note that $\varphi$ is a proper map since the graphs of $\tilde {g_0}, g_1$ are closed. Moreover, for each point $P=[u:v] \in \mathbb{P}^1$, the fibre at $P$, $\varphi^{-1}(P)=V_+(v\tilde{g_0} -ug_1)$ is finite where $V_+(v\tilde{g_0} -ug_1)$ denotes the projective zero set of the homogeneous polynomial $v\tilde{g_0} -ug_1$. Hence $\varphi$ is a finite map of degree equals to $\deg g_1$.  By Hurwitz formula, see \cite[Chapter 4, Corollary 2.4]{Ha},
the number of ramification points of this map is nonzero, and we can choose a ramification point $P$ such that $g_1(P) \neq 0.$ 
We now claim that $h$ is not reduced for $\lambda = \frac{\tilde {g_0}(P)}{g_1(P)}$. Note that \[\varphi^{-1}([\lambda:1]) = V_+(\tilde{g_0} - \lambda g_1) = V_+(h).\]
Since $P=[u:v]$ is ramified, we get $\mid \varphi^{-1}([\frac{\tilde {g_0}(P)}{g_1(P)}:1])\mid < \deg \varphi.$ Hence $\mid V_+(h) \mid < \deg g_1$. Therefore $h$ has a multiple factor $\ell = vx+uy$. i.e., we can assume $g_1= h =\ell^2g$ for some $g \in \mathbb{C}[x,y]$. 
\end{proof}

    We now present the computation of the Waring rank of $F = F_1F_2$, where $F_1$ is a binary form and $F_2$ a monomial in Equation \eqref{EQ:SumofPerp}.  Indeed, this includes the case where $F_1$ is a binary binomial. 

\begin{theorem}
    Let $F =F_1F_2$  where $F_1 \in \mathbb{C}[x_0, x_1]_d$, and $F_2=x_2^{a_2}\cdots x_n^{a_n}$ such that $n \geq 2$, and $a_2 \leq \cdots \leq a_n$ be integers. Then
    \begin{align}\label{EQ:rkFormula}
        \rk F = (\rk F_1)\cdot \max\{a_2+1, \deg F_1+2-\rk F_1\} \cdot\left(\prod_{i=3}^n (a_i+1)\right).
    \end{align}
\end{theorem}
\begin{proof}
    By Lemma \ref{lem:binary} assume $F_1^{\perp} = (g_0, g_1)$ with $g_1 = \ell^2g$ with $\deg(g_0) \leq \deg(g_1)$. Then
    \[
    F^{\perp} = (g_0, \ell^2g, X_2^{a_2+1}, \ldots, X_n^{a_n+1}).
    \]
  We now divide the proof in to following cases and apply Theorem \ref{thm2} to obtain the result. 
  \newline \noindent {\underline {\it Case} (i)}. Suppose $g_0$ is reduced and $\deg g_0 \leq a_2+1.$ Hence $V(g_0)$ is reduced. Note that except $g_0$, all the other generators are non-reduced in $F^{\perp}.$
  Hence by Theorem \ref{thm2} we obtain 
  \[\rk F = \left(\frac{1}{\min\{a_2+1, \deg g_1\}}\right)\deg g_0 \cdot \deg g_1 \cdot \prod_{i=2}^n(a_i+1). \]
  Since $g_0$ is reduced, $\deg g_0 = \rk F_1,$ and $\deg g_1 = \deg F_1+2\rk F_1$. Therefore we obtain
  \begin{align*}
        \rk F = (\rk F_1)\cdot \max\{a_2+1, \deg F_1+2-\rk F_1\} \cdot\left(\prod_{i=3}^n (a_i+1)\right).
    \end{align*}

\noindent {\underline {\it Case} (ii)}. Suppose $g_0$ is reduced and $\deg g_0 > a_2+1.$ In this case the least degree generator of $F^{\perp}$ is $X_2^{a_2+1}$ which is non-reduced. We apply Theorem \ref{thm2} with $k=0$ case to obtain 
\[
\rk F = \frac{1}{(a_2+1)} \deg g_0 \cdot \deg g_1 \cdot \prod_{i=2}^n(a_i+1).
\]
As in the above case (i), since $g_0$ is reduced, $\rk F_1 = \deg g_0$ and $\deg g_1 = \deg F_1 + 2 \rk F_1$ and $\deg g_1 \geq \deg g_0 > a_2+1.$ Hence we obtain the Equation \eqref{EQ:rkFormula}.

\noindent{\underline {\it Case} (iii)} Assume $g_0$ is non-reduced. In this case, all the generators of $F^{\perp}$ are non-reduced. 
By applying Theorem \ref{thm2} with $k=0$ case, we obtain
\[
\rk F = \left(\frac{1}{\min\{a_2+1, \deg g_0\}}\right) \deg g_1 \cdot \prod_{i=2}^n(a_i+1).
\]
Moreover, $\rk F_1= \deg g_1$ and $\deg g_0 = \deg F_1 +2 -\rk F_1.$ Substituting in the Equation above, we obtain the result.
\end{proof}

We now present the case $k\geq 2$ in Equation \eqref{EQ:SumofPerp} with $F_1$ is a binomial such that $F_1^{\perp}$ is a CI and $F_2$ is a monomial. By the results of \cite[Theorem 4.1]{AltafiEtall25},  \cite[Theorem 3.3]{GR25}, and \cite[Theorem 1.3]{Shibata25} we know that if $F^{\perp}$ is a complete intersection, then it is a quasi-monomial complete intersection. Hence $\rk F$  is {\it almost} immediate from Theorem \ref{Thm:MainRank}.
First, we state the known results describing $F^{\perp}$.
\begin{theorem}[{\cite[Theorem 4.1]{AltafiEtall25}, \cite[Theorem 3.3]{GR25}, \cite[Theorem 1.3]{Shibata25}}]\label{Thm:BinomialAnnihilator}
 Let  
\[ F = x_0^{a_0}\cdots x_k^{a_k}(x_0^{b_0} - x_1^{b_1}\cdots x_k^{b_k})\]
be a binomial form with all $b_i \neq 0$ such that $\sum_{i=1}^n b_i = b_0$ and $k \geq 2,$ and $a_i \geq 0.$ If there exists an integer $1 \leq i \leq k$ with $a_i < q b_i$ where $q = \lfloor{\frac{a_0+1}{b_0}}\rfloor$, then
\[
F^{\perp} = (X_0^{a_0+1}+G, X_1^{a_1+b_1+1}, \ldots , X_k^{a_k+b_k+1}) 
\]
where $G = \sum_{j=1}^m (X_1^{b_1}\cdots X_k^{b_k})^{j} X_0^{a_0+1 - jb_0}$ with 
$m = \min \{\lfloor {\frac{a_i}{b_i}} \rfloor \mid 1 \leq i \leq k\}.$
    
\end{theorem}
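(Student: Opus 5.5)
The plan is to sandwich $F^{\perp}$ against the ideal
\[
I=(X_0^{a_0+1}+G,\;X_1^{a_1+b_1+1},\;\ldots,\;X_k^{a_k+b_k+1}).
\]
I would first prove $I\subseteq F^{\perp}$ and that $I$ is a complete intersection whose socle degree equals $\deg F$. Equality then follows from a Gorenstein duality argument.

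\emph{Step 1: $I$ is a complete intersection with the right socle degree.} Since $\deg_{X_0}G<a_0+1$, reducing modulo $X_1,\ldots,X_k$ sends $X_0^{a_0+1}+G$ to $X_0^{a_0+1}$. Hence $V(I)=\emptyset$ in $\mathbb P^k$, and $k+1$ forms in $k+1$ variables with empty common zero locus form a regular sequence. The socle degree of $T/I$ is the sum of the generator degrees minus $(k+1)$:
\[
(a_0+1)+\sum_{i=1}^k(a_i+b_i+1)-(k+1)=a_0+\sum_{i=1}^k a_i+b_0=\deg F,
\]
where we use $\sum_{i\ge1} b_i=b_0$.

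\emph{Step 2: $I\subseteq F^{\perp}$.} The pure powers annihilate $F$, because $x_i$ occurs in $F$ with exponent at most $a_i+b_i$. For the first generator, set $u=X_1^{b_1}\cdots X_k^{b_k}$ and $\mathbf x^{a}=x_1^{a_1}\cdots x_k^{a_k}$, so that
\[
F=x_0^{a_0+b_0}\mathbf x^{a}-x_0^{a_0}\mathbf x^{a+b}.
\]
Write $G_j=u^jX_0^{a_0+1-jb_0}$, with $G_0=X_0^{a_0+1}$.
\begin{itemize}
\item $G_0$ kills $x_0^{a_0}\mathbf x^{a+b}$.
\item $G_j$ applied to $x_0^{a_0+b_0}\mathbf x^{a}$ is a multiple of $x_0^{(j+1)b_0-1}\mathbf x^{a-jb}$.
\item $G_{j+1}$ applied to $x_0^{a_0}\mathbf x^{a+b}$ is a multiple of the same monomial $x_0^{(j+1)b_0-1}\mathbf x^{a-jb}$.
\end{itemize}
So the action of $X_0^{a_0+1}+G$ on $F$ telescopes. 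The hypothesis $a_i<qb_i$ for some $i$ forces $m<q$, so every exponent $a_0+1-jb_0$ stays positive for $j\le m$ and no term is lost. The minimality in the definition of $m$ is what makes the tail of the telescope vanish: beyond $j=m$ some $x_i$-exponent drops below zero.

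\emph{Step 3: equality.} The ring $T/I$ is Gorenstein with one-dimensional socle concentrated in degree $d=\deg F$. If $F^{\perp}/I$ were a nonzero ideal of $T/I$, it would contain this socle, giving $(F^{\perp})_d\supsetneq I_d$. But both $I_d$ and $(F^{\perp})_d$ have codimension one in $T_d$: for $F^{\perp}$ this holds because $F\neq0$, and for $I$ because the socle of $T/I$ in degree $d$ is one-dimensional. This is a contradiction, so $I=F^{\perp}$.

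The main obstacle is Step 2. The numerical factors produced by differentiation are not $1$, so cancellation between consecutive terms needs the coefficients to match. With the coefficient-$1$ form of $G$ written in the statement, this only works under the contraction action, or after rescaling the monomials of $F$ and the terms of $G$ by suitable factorial constants. I would first redo the telescoping carefully with explicit coefficients $c_j$ in front of $G_j$, and solve the recursion $c_{j+1}\cdot(\text{factor})=c_j\cdot(\text{factor})$. I would also check the boundary index: whether the last surviving term is $j=m$ or $j=m+1$, using that $m=\min_i\lfloor a_i/b_i\rfloor$ makes $u^{m+1}$ kill $\mathbf x^{a}$ but not necessarily $\mathbf x^{a+b}$. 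Step 3 does not depend on these constants, so the whole difficulty is concentrated in this index and coefficient bookkeeping.
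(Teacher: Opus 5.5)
The paper gives no proof of this statement; it is quoted from the cited works, so I judge your argument on its own. Your framework is sound. Step 1 is correct: the generators have no common zero in $\mathbb P^k$, so they form a regular sequence, and the socle degree of $T/I$ equals $\deg F$. Step 3 is also correct: a Gorenstein ideal $I\subseteq F^\perp$ whose socle degree is $\deg F$ must equal $F^\perp$.

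The gap is Step 2, and it is more than bookkeeping. With $G$ as printed, $X_0^{a_0+1}+G\notin F^\perp$, so the statement cannot be proved as written. Your own bullets show why:
\begin{itemize}
\item For $0\le j\le m$, $G_j\circ x_0^{a_0+b_0}\mathbf x^{a}$ is a \emph{nonzero} multiple of $x_0^{(j+1)b_0-1}\mathbf x^{a-jb}$. All exponents are $\ge 0$ because $j\le m$.
\item The only term that can cancel it is $G_{j+1}\circ x_0^{a_0}\mathbf x^{a+b}$. For $j=m$ this requires $G_{m+1}$, which is not in $\sum_{j=1}^m$.
\item Your claim that ``beyond $j=m$ some $x_i$-exponent drops below zero'' holds only for the first monomial. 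In fact $G_{m+1}\circ x_0^{a_0}\mathbf x^{a+b}$ is a nonzero multiple of $x_0^{(m+1)b_0-1}\mathbf x^{a-mb}$.
\end{itemize}
Minimality of $m$ kills $G_{m+1}$ on the first monomial and $G_{m+2}$ on the second, so the telescope closes at $j=m+1$, not at $j=m$. This is also the real role of the hypothesis. The condition $a_i<qb_i$ gives $m+1\le q$, i.e.\ $a_0+1-(m+1)b_0\ge 0$, which is exactly what makes $G_{m+1}$ a polynomial. The terms with $j\le m$ only need $m\le q$.

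A concrete check: take $F=x_0^3x_1x_2(x_0^2-x_1x_2)$. Here $k=2$, $b_1=b_2=1$, $b_0=2$, $a_0=3$, $a_1=a_2=1$, so $q=2$, $m=1$, and $a_1<qb_1$. Then $X_0^4\circ F=120\,x_0x_1x_2$ and $X_0^2X_1X_2\circ F=20\,x_0^3-24\,x_0x_1x_2$. Hence $(X_0^4+cX_0^2X_1X_2)\circ F\neq 0$ for every $c$, and even under contraction with $c=1$ the term $x_0^3$ survives. What does hold is $X_0^4+5X_0^2X_1X_2+25X_1^2X_2^2\in F^\perp$.

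In general your recursion for the coefficients is $c_{j+1}=\rho\, c_j$, with
\[
\rho=\frac{(a_0+b_0)!\,a_1!\cdots a_k!}{a_0!\,(a_1+b_1)!\cdots(a_k+b_k)!},
\]
so the correct first generator is $\sum_{j=0}^{m+1}\rho^{j}u^{j}X_0^{a_0+1-jb_0}$, with $\rho=1$ for the contraction action. With this corrected statement your Steps 1 and 3 apply verbatim, since every term with $j\ge 1$ is still divisible by $u$ and has degree $a_0+1$. The argument is then complete.
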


We now present our results on the Waring rank of binomials whose annihilator ideal is a quasi-monomial complete intersection as in Theorem \ref{Thm:BinomialAnnihilator}.

\begin{theorem}\label{Thm:BinomialRank}
    Let $F =x_0^{a_0}x_1^{a_1}\cdots x_n^{a_n}(x_0^{b_0} - x_1^{b_1}\cdots x_n^{b_n})$.  Write $F = F_1F_2$ where $F_1 = x_0^{a_0}x_1^{a_1}\cdots x_k^{a_k}(x_0^{b_0} - x_1^{b_1}\cdots x_k^{b_k})$ a binomial with  $b_i \neq 0$ for $1 \leq i \leq k $, $b_{k+1}= \cdots = b_n = 0$, and $F_2 = x_{k+1}^{a_{k+1}}\cdots x_n^{a_n}$ for some $1 \leq k \leq n$, and  $a_{k+1} \leq \cdots \leq a_n$. 
    \newline If $2 \leq k \leq n$, and $F_1$ satisfies conditions as in Theorem \ref{Thm:BinomialAnnihilator}, then  
    \[
    \rk F = \frac{(a_0+1)}{\lambda} \prod_{i=1}^k (a_i+b_i+1)\prod_{i=k+1}^n (a_i+1) 
    \]
    where $\lambda = \min\{\{a_0+1, a_k+1\} \cup \{a_i+b_i+1 \mid 1 \leq i \leq k\}\}$. 
\end{theorem}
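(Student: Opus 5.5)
The plan is to reduce the statement to Corollary~\ref{Thm:MainRank}, case (i), by writing down $F^\perp$ explicitly and checking that its non-monomial generator always has a multiple factor.

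\textbf{Step 1: the annihilator.} By \eqref{EQ:SumofPerp}, $F^\perp=(F_1^\perp)_{S_1}+(F_2^\perp)_{S_2}$. Theorem~\ref{Thm:BinomialAnnihilator} gives $(F_1^\perp)_{S_1}$, and $(F_2^\perp)_{S_2}$ is the usual monomial ideal. Hence
\[
F^\perp=\bigl(X_0^{a_0+1}+G,\ X_1^{a_1+b_1+1},\ldots,X_k^{a_k+b_k+1},\ X_{k+1}^{a_{k+1}+1},\ldots,X_n^{a_n+1}\bigr),
\]
with $G=\sum_{j=1}^m (X_1^{b_1}\cdots X_k^{b_k})^jX_0^{a_0+1-jb_0}$. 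This is a quasi-monomial complete intersection. Each term of $G$ has $X_0$-degree at most $a_0+1-b_0<a_0+1$, so $\deg_{X_0}G<a_0+1$, as Corollary~\ref{Thm:MainRank} requires.

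Corollary~\ref{Thm:MainRank} also needs every pure-power generator to have degree at least $2$. This holds for $X_i^{a_i+b_i+1}$ with $i\le k$, since $b_i\ge 1$. For $i>k$ it amounts to $a_i\ge 1$. If some $a_i=0$ for $i>k$, the variable $x_i$ does not occur in $F$ and contributes a linear generator $X_i$. I would treat that case separately by discarding such variables before applying the corollary, and interpret the statement under the standing assumption $a_i\ge1$ for $i>k$.

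\textbf{Step 2: the key point, a multiple factor.} Set $N=X_1^{b_1}\cdots X_k^{b_k}$. Then
\[
X_0^{a_0+1}+G \;=\; X_0^{\,a_0+1-mb_0}\sum_{j=0}^m X_0^{(m-j)b_0}N^j .
\]
By hypothesis some index $i$ has $a_i<qb_i$, so $m\le\lfloor a_i/b_i\rfloor<q$, which gives $m\le q-1$. Since $qb_0\le a_0+1$, we get
\[
a_0+1-mb_0\;\ge\; a_0+1-(q-1)b_0\;\ge\; b_0 .
\]
The hypothesis $b_0=\sum b_i$ with every $b_i\ge1$ and $k\ge2$ gives $b_0\ge 2$. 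Therefore $X_0^2$ divides $X_0^{a_0+1}+G$, so this generator has a multiple factor. I expect this to be the only non-formal step, and the main thing to get right is the inequality $m<q$ coming from the hypothesis of Theorem~\ref{Thm:BinomialAnnihilator}.

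\textbf{Step 3: conclusion.} Since all generators are non-reduced, Corollary~\ref{Thm:MainRank}, case (i) (equivalently Theorem~\ref{thm2} with $k=0$), gives
\[
\rk F=\frac{1}{\lambda}\,\deg\bigl(X_0^{a_0+1}+G\bigr)\prod_{i=1}^k(a_i+b_i+1)\prod_{i=k+1}^n(a_i+1),
\]
where $\lambda$ is the smallest generator degree. That smallest degree is the minimum of $a_0+1$, the numbers $a_i+b_i+1$ for $1\le i\le k$, and the monomial degrees $a_j+1$ for $j>k$. Because $a_{k+1}\le\cdots\le a_n$, the last group contributes only $a_{k+1}+1$. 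So $\lambda$ is the minimum in the statement, reading the index $a_k+1$ there as $a_{k+1}+1$. This is exactly the claimed formula.
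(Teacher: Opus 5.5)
Your proposal follows the paper's proof. It makes the same reduction via \eqref{EQ:SumofPerp} and Theorem~\ref{Thm:BinomialAnnihilator}, uses the same key inequality $m<q$ together with $b_0\ge 2$ (you argue directly, the paper by contradiction), and appeals to the same case (i) of Corollary~\ref{Thm:MainRank}. Your two caveats are correct, and the paper glosses over both: $\lambda$ must involve $a_{k+1}+1$ rather than $a_k+1$, and one needs $a_i\ge 1$ for $i>k$.

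However, there is a genuine gap, which the paper's proof has as well. The formula for $G$ in Theorem~\ref{Thm:BinomialAnnihilator}, which you take on trust, is off by one: the sum must run to $m+1$, not $m$. Put $N=X_1^{b_1}\cdots X_k^{b_k}$. Since $mb_i\le a_i$ for all $i$, the term $N^mX_0^{a_0+1-mb_0}$ (which is $X_0^{a_0+1}$ itself when $m=0$) sends $x_0^{a_0+b_0}x_1^{a_1}\cdots x_k^{a_k}$ to a nonzero multiple of $x_0^{(m+1)b_0-1}\prod_i x_i^{a_i-mb_i}$. No term of the stated $X_0^{a_0+1}+G$ can cancel this monomial. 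Cancelling it requires an extra term $c\,N^{m+1}X_0^{a_0+1-(m+1)b_0}$ acting on the other summand of $F$, and that term exists exactly because $m+1\le q$. Compare Theorem~\ref{Thm:Shibata}: with the roles of the variables swapped, the analogous sum there runs to $v=\lfloor a_0/b_0\rfloor+1$.

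Consequently the $X_0$-adic order of the quasi-monomial generator is $a_0+1-(m+1)b_0$, and your inequality only gives $a_0+1-(m+1)b_0\ge a_0+1-qb_0\ge 0$. The statement genuinely fails. Take $F=x_0^3x_1^2x_2^3(x_0^4-x_1^3x_2)$.
\begin{itemize}
\item Here $q=1$ and $a_1=2<3=qb_1$, so the hypotheses hold, and $m=0$, so the stated $G$ is $0$. Yet $X_0^4\circ F=840\,x_0^3x_1^2x_2^3\neq 0$.
\item In fact $F^\perp=(2X_0^4+7X_1^3X_2,\,X_1^6,\,X_2^5)$: these three forms annihilate $F$ and form a complete intersection of socle degree $12=\deg F$.
\item The quartic generator is irreducible, since its affine part $2X_0^4+7X_1^3$ is. So Theorem~\ref{thm2} with $k=1$ gives $\rk F=4\cdot 6=24$.
\item The claimed formula gives $\frac{4}{4}\cdot 6\cdot 5=30$.
\end{itemize}
Once $G$ is corrected, your computation still proves the formula under the stronger hypothesis $m\le q-2$ (equivalently, $a_i<(q-1)b_i$ for some $i$), because then $a_0+1-(m+1)b_0\ge b_0\ge 2$. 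Otherwise one must decide whether the quasi-monomial generator is reduced, and if it is, use case (ii) of Corollary~\ref{Thm:MainRank}.
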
    
    
\begin{proof}
   By Theorem \ref{Thm:BinomialAnnihilator} we have 
   \begin{align*}
       F^{\perp} = (X_0^{a_0+1}+G, X_1^{a_1+b_1+1}, \ldots , X_k^{a_k+b_k+1}, X_{k+1}^{a_{k+1}+1}, \ldots, X_n^{a_n+1}) 
   \end{align*}
  where $G = \sum_{j=1}^m (X_1^{b_1}\cdots X_k^{b_k})^{j} X_0^{a_0+1 - jb_0}$ with 
$m = \min \{\lfloor {\frac{a_i}{b_i}} \rfloor \mid 1 \leq i \leq k\}$. Hence $F^{\perp}$ is a quasi-monomial complete intersection.
We {\it claim} that $G$ is not square-free. That is, we prove that $a_0+1 -mb_0 \geq 2.$ 
    On the contrary, assume that $a_0+1 - mb_0 \leq 1.$ This means
    \begin{equation}\label{EQ:qLeqm}
        q = \lfloor \dfrac{a_0+1}{b_0}\rfloor \leq \lfloor \frac{mb_0+1}{b_0}\rfloor \leq \lfloor m +\frac{1}{b_0} \rfloor = m.
    \end{equation}
    The last equality follows from the fact that $b_0 = \sum_{i=1}^k b_i \geq k \geq 2.$ 
    Moreover, by the definition of $m$ we have $m b_j \leq a_j$ for all $1 \leq j \leq k.$
    By hypothesis there exists $i$ such that $a_i < q b_i$ for some $1 \leq i \leq k.$ 
Therefore, for the same $i$, we have $mb_i \leq a_i <qb_i.$ Since $b_i >0$ we get
    \begin{equation}\label{EQ:mLessthanq}
        m <q.
    \end{equation}
    Hence, by Equations \eqref{EQ:qLeqm} and \eqref{EQ:mLessthanq} we have a contradiction. Therefore, we can write $G = X_0^{a_0+1-mb_0} g$ for some $g\in T.$ Thus $X_0^{a_0+1}+G$ is not square-free. Hence, by Corollary \ref{Thm:MainRank}, we obtain the result.
    \end{proof}

 \noindent We apply the Corollary \ref{Thm:BinomialRank} in a specific case in the example below:
    \begin{example}
        Let $F= x_0^2x_1^3x_2^3x_3^4(x_0^6-x_1^4x_2x_3) \in \mathbb{C}[x_0, x_1, x_2, x_3]$. By Theorem \ref{Thm:BinomialRank}, we obtain the Waring rank $\rk F = 240$.
        
    \end{example}

\section{Waring rank of product of binary forms}\label{Sec:product-binary-forms}
In this section we compute the Waring rank of the product 
of binary forms in different set of variables.
Throughout this section, assume that 
$S=\mathbb{C}[x_0,x_1, y_0,y_1]$, 
$T=\mathbb{C}[X_0,X_1, Y_0,Y_1]$, 
$T_1= \mathbb{C}[X_0,X_1]$, and $T_2= \mathbb{C}[Y_0,Y_1]$.   
Let $F_1 \in (S_1)_{d_1}$, and $F_2 \in (S_2)_{d_2}$, where 
$d_1,d_2>0$. 
For $1 \leqslant i \leqslant 2$,  we know that $(F_i)^{\perp} \in T_i$ is a complete intersection. Let
\[
F_1^\perp = (g_0, g_1) \subset T_1 \quad \text{with } \deg (g_0)  \leqslant\deg (g_1),
\]
and similarly
\[
F_2^\perp = (h_0, h_1) \subset T_2, \quad \text{with } \deg (h_0)  \leqslant\deg (h_1).
\]
By Sylvester's algorithm \cite{sylvester1851lx,Sylvester1851}, we have
\begin{align}\label{eq:Sylvesters algo}
    \rk (F_1) &=   \begin{cases}
                \deg (g_0) & \text{if } g_0 \text{ is reduced} \\
                \deg (g_1)   & \text{otherwise.}
                 \end{cases}\\
            d_1 +2 &=\deg (g_0) + \deg (g_1)\nonumber
\end{align}
A similar result holds for $\rk (F_2)$.

Let $F = F_1 F_2 \in S_{d_1+d_2}$. By \cite[Proposition 3.1]{DFP20-ApolarAlgebra} we have
\[
F^{\perp} = (F_1^{\perp})_{T_1} + (F_2^{\perp})_{T_2}.
\]
Therefore
\[
F^\perp = (g_0, g_1, h_0, h_1) \subset T,
\]
is a complete intersection in $T$. Without loss of generality, assume that ${\rm deg}(g_0) \leqslant {\rm deg}(h_0)$.

As an application of results in Section \ref{Sec:ComputationofWRCI}, we prove the following result.
\begin{theorem}[Waring rank of product of binary forms]\label{Thm:ProductBinary}
    If $F=F_1F_2$ such that $F_1 \in (T_1)_{d_1}$, $F_2 \in (T_2)_{d_2}$ as defined above,
    then 
    \[
    \rk (F) = (\rk (F_1))\cdot (\rk (F_2)) \cdot\max\{d_1+2-\rk (F_1), d_2+2 -\rk (F_2)\}.
    \]
\end{theorem}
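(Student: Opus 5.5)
We will deduce the formula from Theorem~\ref{thm2}, applied to the complete intersection $F^\perp=(g_0,g_1,h_0,h_1)\subset T$. The first step is to normalise the generators. By Lemma~\ref{lem:binary} we may replace $g_1$ by a non-reduced form $\ell^2 g$ of the same degree, and likewise $h_1$ by a non-reduced form $m^2h$; this does not change $g_0,h_0$ or any degree. If $\deg g_0=\deg g_1$, a general member of the pencil $(F_1^\perp)_{\deg g_0}$ is reduced, so we may also take $g_0$ reduced; we do the same for $h_0$. Put $r_i=\rk(F_i)$ and $s_i=d_i+2-r_i$. By \eqref{eq:Sylvesters algo}, $\{\deg g_0,\deg g_1\}=\{r_1,s_1\}$, with $\deg g_0=r_1$ exactly when $g_0$ is reduced, and similarly for $F_2$. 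Hence the product of all four degrees equals $r_1s_1r_2s_2$. The target formula is therefore equivalent to showing that the divisor $a_k$ produced by Theorem~\ref{thm2} equals $\min\{s_1,s_2\}$.

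We then split into cases according to whether $g_0$ and $h_0$ are reduced; recall the standing assumption $\deg g_0\le\deg h_0$. In each case we order the four generators by degree, breaking ties so that reduced generators come first.

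\emph{Case (a): $g_0$ and $h_0$ both reduced.} Then $\deg g_1=s_1$, $\deg h_1=s_2$, and the non-reduced generators are exactly $g_1,h_1$. If $s_1\le\deg h_0=r_2$, then $f_k=g_1$ and the preceding generators are $g_0$ alone, with $V(g_0)$ reduced; moreover $s_1\le r_2\le s_2$, so $a_k=s_1=\min\{s_1,s_2\}$. Otherwise the generators preceding $f_k$ are $g_0,h_0$, and $f_k$ is the smaller of $g_1,h_1$, so $a_k=\min\{s_1,s_2\}$. In that subcase we must check that $V(g_0,h_0)$ is reduced. This holds because
\[
T/(g_0,h_0)\cong T_1/(g_0)\otimes_{\mathbb C}T_2/(h_0),
\]
and each factor is a reduced ring, since $g_0$ and $h_0$ are squarefree binary forms that factor into distinct linear forms over $\mathbb C$. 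A tensor product of reduced finitely generated algebras over the algebraically closed field $\mathbb C$ is reduced.

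\emph{Case (b): $g_0$ non-reduced.} Then $s_1=\deg g_0$ is the least degree overall, so we take $k=0$ and $f_0=g_0$; the reducedness hypothesis in Theorem~\ref{thm2} is vacuous. We must check $s_1\le s_2$. If $h_0$ is non-reduced, then $s_2=\deg h_0\ge\deg g_0=s_1$. If $h_0$ is reduced, then $s_2=\deg h_1\ge\deg h_0\ge s_1$. In both subcases $a_0=s_1=\min\{s_1,s_2\}$.

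\emph{Case (c): $g_0$ reduced and $h_0$ non-reduced.} Then $\deg g_1=s_1$ and $\deg h_0=s_2$. Here $f_k$ is whichever of $g_1,h_0$ has smaller degree, so $a_k=\min\{s_1,s_2\}$. If $f_k=h_0$, the generators preceding it are $g_0$ and possibly $g_1$. But $g_1$ is non-reduced and $\deg g_1=s_1>s_2$ in this subcase, so $g_1$ in fact comes after $h_0$, and only $V(g_0)$, which is reduced, needs checking.

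In every case Theorem~\ref{thm2} gives $\rk F=r_1s_1r_2s_2/\min\{s_1,s_2\}=r_1r_2\max\{s_1,s_2\}$, which is the claimed formula.

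The main obstacle we expect is the bookkeeping with ties in degree. Theorem~\ref{thm2} needs some ordering in which the generators before the first non-reduced one cut out a reduced scheme, and we must make sure the normalisation of Lemma~\ref{lem:binary}, the choice of reduced $g_0,h_0$ when degrees coincide, and the chosen ordering are all compatible. The only genuinely geometric input is the reducedness of $V(g_0,h_0)$, which the tensor-product argument in Case (a) supplies.
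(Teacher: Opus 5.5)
Your proof is correct and is essentially the paper's argument. Both run a case analysis on whether $g_0,h_0$ are reduced and how the degrees interleave, and feed it into Theorem~\ref{thm2}, using Lemma~\ref{lem:binary} to make $g_1,h_1$ non-reduced and \eqref{eq:Sylvesters algo} to turn degrees into ranks. The only difference in packaging is that you note the four degrees multiply to $r_1s_1r_2s_2$, so it suffices to show $a_k=\min\{s_1,s_2\}$, in place of the paper's table.

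Your tensor-product argument that $V(g_0,h_0)$ is reduced supplies a step the paper uses in its case (D) without comment. It also makes your tie-breaking convention immaterial, which matters because you do not apply it consistently: in case (a) with $s_1=r_2$, and in case (b) with a reduced $h_0$ of the same degree as $g_0$, the convention would put the reduced generator first. In every degree-sorted ordering, $a_k$ is the least degree of a non-reduced generator, and the generators preceding it are reduced ones among $g_0,h_0$, so they cut out a reduced scheme.
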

\begin{proof}
Without loss of generality, assume that ${\rm deg}(g_0) \leqslant {\rm deg}(h_0)$.
We prove the result by considering the following cases.
\begin{enumerate}[label={\rm (\Alph*)}]
    \item If $g_0$ is non-reduced. 
    Applying Theorem \ref{thm2} with $k=0$ we get
    \[
    \rk (F) ={\rm deg}(g_1){\rm deg}(h_0){\rm deg}(h_1).
    \]
    \item If $g_0$ is reduced, 
    ${\rm deg}(g_0)\leqslant {\rm deg}(g_1)\leqslant {\rm deg}(h_0)$. 
    By Lemma \ref{lem:binary}, $g_1$ is non-reduced, and 
    applying Theorem \ref{thm2} with $k=1$ we get 
    \[
    \rk (F) ={\rm deg}(g_0){\rm deg}(h_0){\rm deg}(h_1).
    \]
    \item If $g_0$ is reduced, 
    ${\rm deg}(g_0)\leqslant {\rm deg}(h_0)< {\rm deg}(g_1)$, 
    and $h_0$ is non-reduced.
    Applying Theorem \ref{thm2} with $k=1$ we get     
    \[
    \rk (F) ={\rm deg}(g_0){\rm deg}(g_1){\rm deg}(h_1).
    \]
    \item If $g_0$ is reduced, 
    ${\rm deg}(g_0)\leqslant {\rm deg}(h_0)< {\rm deg}(g_1)$, 
    and $h_0$ is reduced.
    By Lemma \ref{lem:binary}, both $g_1$ 
    and $h_1$ are non-reduced. 
    Applying Theorem \ref{thm2} with $k=2$ we get 
    \[
    \rk (F) ={\rm deg}(g_0){\rm deg}(h_0)\cdot {\rm max}\{{\rm deg}(g_1),{\rm deg}(h_1)\}.
    \]
\end{enumerate}

Using \eqref{eq:Sylvesters algo} we compute the entries
in the following table. Note that 
if $\rk (F_2) = \deg (h_i)$, then $d_2+2-\rk (F_2) = \deg h_{1-i}$.

\begin{center}
{\footnotesize
\begin{tabular}[t]{|p{.5cm}|p{5cm}|p{1.2cm}|p{1.2cm}|p{6cm}|}
		\hline
		   & \bf Hypothesis & $\mathbf{Rk}(F_1)$ & $\mathbf{Rk}(F_2)$ & $\mathbf{max}\{d_1+2-\mathrm{rk} F_1, d_2+2 -\mathrm{rk} F_2\}$ \\ \hline  
        (A) & \vspace{-1ex}
        \begin{itemize}[leftmargin=*, nosep]\setlength{\itemsep}{0pt}
        \item $g_0$ is non-reduced
        \end{itemize} & $\deg (g_1)$ & $\deg (h_i)$ & $\max\{\deg (g_0), \deg (h_{1-i})\} = \deg (h_{1-i})$ \\ \hline  
        (B) & \vspace{-1ex}
        \begin{itemize}[leftmargin=*, nosep]\setlength{\itemsep}{0pt}
                \item $g_0$ is reduced
                \item $\mathrm{deg}(g_0)\leqslant \mathrm{deg}(g_1)\leqslant \mathrm{deg}(h_0)$
              \end{itemize} & $\deg (g_0)$ & $\deg (h_i)$ &$\max\{\deg (g_1), \deg (h_{1-i})\} = \deg (h_{1-i})$  \\ \hline  
        (C) & \vspace{-1ex}
        \begin{itemize}[leftmargin=*, nosep]\setlength{\itemsep}{0pt}
                \item $g_0$ is reduced
                \item $\mathrm{deg}(g_0)\leqslant \mathrm{deg}(h_0)< \mathrm{deg}(g_1)$
                \item $h_0$ is non-reduced
              \end{itemize} &$\deg (g_0)$  & $\deg (h_1)$& $\max\{\deg (g_1), \deg (h_0)\} = \deg (g_1)$  \\ \hline  
        (D) & \vspace{-1ex}
        \begin{itemize}[leftmargin=*, nosep]\setlength{\itemsep}{0pt}
                \item $g_0$ is reduced
                \item $\mathrm{deg}(g_0)\leqslant \mathrm{deg}(h_0)< \mathrm{deg}(g_1)$
                \item $h_0$ is reduced
              \end{itemize}\vspace{-2ex} &$\deg (g_0)$  & $\deg (h_0)$& $\max\{\deg (g_1), \deg (h_1)\}$  \\ \hline  
\end{tabular}
}
\end{center}
One easily checks that in all cases 
$\rk (F) = (\rk (F_1))\cdot (\rk (F_2)) \cdot\max\{d_1+2-\rk (F_1), d_2+2 -\rk (F_2)\}$. This completes
the proof of the Theorem.
\end{proof}

\section{Final remarks}
The following example suggests that to get a closed formula for the Waring rank of complete intersection forms in terms of the degree of the generators of $F^{\perp}$ as in Theorem \ref{thm2}, the assumption of reducedness is perhaps necessary.
\begin{example}\label{counterexample}
Consider $I= (Y^2-Z^2, XY, X^5-Z^5) \subset T=\mathbb{C}[X,Y,Z]$. 
Note that $I$ is an Artinian complete intersection ideal. Hence, 
there exists a homogeneous form $F$ such that $F^{\perp}=I$. Moreover, each of the generators of $F^{\perp}$ is reduced(does not have a multiple factor), but $V(Y^2-Z^2, XY)$ is non-reduced.
We show that $4 < \rk (F) \leqslant 8.$

\begin{proof}
We first show that $\rk F \geq 4$. 
Let $\mathbb{X}$ be such that $I(\mathbb{X}) \subseteq F^{\perp}$.
Since $I(\mathbb X)$ is a radical ideal, by \cite{GMR83}, the sequence $(c_0,c_1,\ldots)$ where
\[c_i=\HF(T/I(\mathbb{X}), i)-\HF(T/I(\mathbb{X}), i-1)\]
is the Hilbert function of an Artinian ring. It follows
that 
$$\HF(T/I(\mathbb{X}), i)\geqslant \HF(T/I(\mathbb{X}), i-1)$$
for all $i$. 
Since $I(\mathbb{X}) \subset F^{\perp}$
we obtain 
$$\HF(T/F^{\perp}, i) \leq \HF(T/I(\mathbb{X}), i)$$ 
for all $i$.
Hence, 
$$3 = \HF(T/F^{\perp}, 1) \leq \HF(T/I(\mathbb{X}), 1)\leqslant 3,$$
and 
$$4 = \HF(T/F^{\perp}, 2) \leq \HF(T/I(\mathbb{X}), 2)\,.$$ 
Therefore, $\HF(T/I(\mathbb{X}), i) \geq 4$ for $i \geq 2.$
Recall that $\rk(F)=\HF(T/I(\mathbb{X}), i)$ for $i\gg 0$.
Thus, $\rk(F) \geq 4$.

We now claim that $\rk(F)> 4$.
If not, assume that $\rk(F)=|\mathbb X|=4$.
As $\HF(T/I(\mathbb{X}), i)$ is eventually constant and equal to 4,
it follows that the Hilbert function of $T/I(\mathbb{X})$ is $(1,3,4,4,\ldots)$.
Hence,  $I(\mathbb{X})_2 = (F^{\perp})_2 = (Y^2-Z^2, XY)$. 
Therefore, $I(\mathbb{X}) \supseteq (Y^2-Z^2, XY)$. Moreover, it is easy to check that the Hilbert function of $T/J$ is also $(1,3,4,4, \ldots )$, where $J=(Y^2-Z^2, XY)$. Hence $I(\mathbb{X}) =(Y^2-Z^2, XY)$ which is a contradiction because  $V(Y^2-Z^2, XY)$ is not reduced.

We give an upper bound by giving an
explicit apolar ideal $I(\mathbb{X}) \subset F^{\perp}$.
Let $I(\mathbb{X}) = (XY,Y(Y^2-Z^2), X^6+Z^4(Y^2-Z^2))$. 
Using Macaulay2 \cite{M2}, it is easy to check that 
$I(\mathbb{X}) \subset F^{\perp}$ and $I(\mathbb{X})$ 
is a radical ideal with $|\mathbb{X}| = 8$. 
Therefore, $\rk (F) \leqslant 8$.

\end{proof}
\end{example}

\bibliography{main}
\bibliographystyle{halpha}
\end{document}